\undefined \DeclareGraphicsRule{*}{eps}{*}{} \else
\newtheorem{theorem}{Theorem}[section]
\newtheorem{lemma}[theorem]{Lemma}
\newtheorem{prop}[theorem]{Proposition}
\newtheorem{conj}[theorem]{Conjecture}
\newtheorem{exam}[theorem]{Example}
\newtheorem{definition}[theorem]{Definition}
\begin{document}

\title{{Weighted Erd\H{o}s-Burgess and Davenport constant in commutative rings}}
\author{
Guoqing Wang\\
\small{School of
Mathematical Sciences, Tiangong University, Tianjin, 300387, P. R. China}\\
\small{Email: gqwang1979@aliyun.com}
\\
}
\date{}
\maketitle

\begin{abstract}
Let $R$ be a finite commutative unitary ring. An idempotent in $R$ is an element $e\in R$ with $e^2=e$. Let $\Psi$ be a subgroup of the group ${\rm Aut}(R)$ of all automorphisms of $R$. The $\Psi-$weighted Erd\H{o}s-Burgess constant ${\rm I}_{\Psi}(R)$ is defined as
the smallest positive integer $\ell$  such that every sequence over $R$
of length at least $\ell$ must contain a nonempty subsequence $a_1,\ldots, a_{r}$
such that $\prod\limits_{i=1}^r  \psi_i(a_i)$ is one idempotent of $R$ where $\psi_1,\ldots,\psi_r\in \Psi$. In this paper, for the finite quotient ring of a Dedekind domain $R$, a connection is established between the $\Psi-$weighted-Erd\H{o}s-Burgess constant of $R$ and the $\Psi-$weighted Davenport constant of its group of units by all the prime ideals of $R$.
\\
\noindent{\small {\bf Key Words}: {\sl Weighted Erd\H{o}s-Burgess constant; Weighted Davenport constant; Zero-sum; Principal ideal rings; Dedekind domains}}

\end{abstract}

\section {Introduction}

The additive properties of sequences in finite abelian groups have been widely studied (see \cite{GaoGeroldingersurvey} for a survey), since  K. Rogers \cite{rog1} in 1963 pioneered the investigation of a combinatorial invariant associated with an arbitrary finite abelian group $G$, here denoted by ${\rm D}(G)$ and called the Davenport constant of $G$, which can be defined as the smallest $\ell\in \mathbb{N}$ such that
every sequence $T$ of terms from the group $G$ of length at least $\ell$ contains a nonempty subsequence $T'$ the sum of whose terms is equal to the identity element of the group $G$. Later on, H. Davenport popularized this invariant in the study of algebraic number theory (as reported in \cite{Olson1}), which may be the reason why this invariant was named after Davenport instead of its pioneer, Rogers.  The invariant ${\rm D}(G)$ has been investigated extensively in the past almost 60 years, and also been generalized from different aspects in connection with Factorization Theory in Algebra or binary quadratic forms, etc (see \cite{Halter-Koch, DeMaOrOr,Skalba, SkalbaEuropean}).
Adhikari, Chen  et.al  \cite{AdiChenDis,AdiChenJCTA, AdiRath} defined the Davenport constant with integers weights, which motivates a huge amount of researches (see \cite{AdiDaivSun, Grynkiewicz, Luca, MaOrsaSchmid, Thangadurai, YuanZengEruo}, and \cite{ Grynkiewiczmono} Chapter 16). Zeng and Yuan \cite{ZengYuanDiscrete} further defined the Davenport constant with homomorphisms weights as follows.

\noindent \textbf{Definition A.} \cite{ZengYuanDiscrete} \ {\sl Let $G^*, G$ be finite abelian groups written additively with a nontrivial homomorphism group ${\rm Hom}(G^*, G)$. For a nonempty subset $\Psi$ of ${\rm Hom}(G^*, G)$, let ${\rm D}_{\Psi}(G)$ be the least positive integer $\ell$
such that any sequence over $G^*$ of length
least $\ell$  must contain some terms $a_1,\ldots, a_{r}$ ($r\geq 1$)
such that $\sum\limits_{i=1}^{r}  \psi_i(a_i)=0_G$ for some $\psi_1,\ldots,\psi_{r}\in \Psi$.}

Recently, the Davenport constant together with some other additive properties of sequences were investigated in the setting of semigroups and rings (see \cite{Deng, wangDavenportII, wangAddtiveirreducible, wangidempotent, wangAIMS, wangINJ, wangAdmath, wangHaoZhuang, wangZhangWangQu} for example.) Among them, one of the following combinatorial invariant on idempotents of semigroups was proposed due to a question by P. Erd\H{o}s (see \cite{Burgess69, Gillam72}).

\noindent \textbf{Definition B.} (\cite{wangidempotent}, Definition 4.1) \ {\sl For a commutative semigroup $\mathcal{S}$, define the {\bf Erd\H{o}s-Burgess constant} of $\mathcal{S}$, denoted by $\textsc{I}(\mathcal{S})$, to be the least $\ell\in \mathbb{N}\cup \{\infty\}$ such that every sequence $T$ of terms from $\mathcal{S}$ and of length $\ell$ must contain one or more terms whose product is an idempotent.}

For a finite abelian group $G$, since the identity is the unique idempotent,  then
the Erd\H{o}s-Burgess constant of $G$ reduces to the Davenport constant. That is, the Erd\H{o}s-Burgess constant is the natural generalization of Davenport constant into semigroups. While, the most important class of commutative semigroups are the multiplicative semigroups of commutative rings.
On commutative rings, the author \cite{wangAdmath} showed that the Erd\H{o}s-Burgess constant exists only for {\bf finite} commutative rings except for a family of infinite commutative rings with a given very special form. That is, to study this invariant in the realm of commutative rings, we may consider it only for {\sl finite} commutative rings. The upper bound for a general finite semigroup can be given by the Gillam-Hall-Williams Theorem \cite{Gillam72}. Recently, the lower bound of Erd\H{o}s-Burgess constant was obtained for some classical finite commutative principal ideal rings (see \cite{wangHaoZhuang}), and more generally, for the finite quotient rings of a Dedekind domain which can seen as below.

\noindent \textbf{Theorem C.} \cite{KravitzSah}  \ {\sl Let $D$ be a Dedekind domain and $K$ a nonzero proper ideal of $D$ such that $R=D\diagup K$ is a finite ring.  Then ${\rm I}(\mathcal{S}_R)\geq {\rm D}({\rm U}(R))+\Omega(K)-\omega(K),$ where $\Omega(K)$ is the number of the prime ideals (repetitions are counted) and $\omega(K)$ the number of distinct prime ideals
in the factorization when $K$
is factored into a product of prime ideals. Moreover, equality holds for the case when
$K$ is factored into either a power of some prime ideal or a product of some pairwise distinct prime ideals.}

\noindent $\bullet$ {\small For any commutative unitary ring $R$, let ${\rm U}(R)$ be the group of units and $\mathcal{S}_R$ the multiplicative semigroup of the ring $R$.}

The author obtained a sharp lower bound for  a general finite commutative ring which generalized Theorem C.

\noindent \textbf{Theorem D.} \cite{wangAIMS}
{\sl Let $R$ be a finite commutative ring with identity. Then
$${\rm I}(\mathcal{S}_R)\geq {\rm D}({\rm U}(R))+ \sum\limits_{M} ({\rm ind}(M)-1)$$ where $M$ is taken over all distinct prime ideals of $R$ and ${\rm ind}(M)$ is the least integer $t>0$ such that $M^{t}=M^{t+1}$. Moreover, equality holds if $R$ is a local ring or all prime ideals of $R$ have the indices one.}

The above Theorem D established a connection between the Erd\H{o}s-Burgess constant of a ring $R$ and the Davenport constant of its group of units by the indices of all prime ideals of $R$. Motivated by the study of Davenport constant with weights, it would be interesting to consider the following question:

`For a finite commutative ring with identity $R$, does the above relation between the Erd\H{o}s-Burgess constant $R$ and the Davenport constant of its group of units still hold under given automorphisms of $R$?'

To consider this question, we need to generalize the Erd\H{o}s-Burgess with weights in the setting of commutative semigroups, for which the definition is as follows.

\begin{definition} Let $\mathcal{S}^*, \mathcal{S}$ be commutative semigroups, and let ${\rm hom}(\mathcal{S}^*, \mathcal{S})$ be the set of all homomorphisms from $\mathcal{S}^*$ to $\mathcal{S}$.
Let $\Psi$ be a nonempty subset of ${\rm hom}(\mathcal{S}^*, \mathcal{S})$. Define the $\Psi-$weighted-Erd\H{o}s-Burgess constant of $\mathcal{S}$, denoted ${\rm I}_{\Psi}(\mathcal{S})$, to be
the least $\ell\in \mathbb{N}\cup \{\infty\}$, such that every sequence over $\mathcal{S}^*$
of length at least $\ell$ must contain some terms $a_1,\ldots, a_{r}$ ($r\geq 1$)
such that $\prod\limits_{i=1}^r  \psi_i(a_i)$ is one idempotent of $\mathcal{S}$ where $\psi_1,\ldots,\psi_r\in \Psi$.
\end{definition}

Let $R$ be a finite commutative ring with identity. Let $\Psi$ be a subgroup of ${\rm Aut}(R)$. In this paper, we established a connection between the
 $\Psi-$weighted-Erd\H{o}s-Burgess constant of the multiplicative semigroup $\mathcal{S}_R$ and the $\Psi-$weighted Davenport constant of its group of units by a function of indices of all the prime ideals of $R$.  Our main Theorems are stated as Theorems \ref{Theorem finite commutative rings}, and \ref{Theorem Dedekind rings} which
 generalizes Theorem C.

\section{Notations}

For integers $a,b\in \mathbb{Z}$, we set $[a,b]=\{x\in \mathbb{Z}: a\leq x\leq b\}$. For a real number $x$, we denote by $\lfloor x\rfloor$ the largest integer that is less
than or equal to $x$, and by $\lceil x\rceil$ the smallest integer that is greater than or equal to $x$.

Let $R$ be a finite commutative ring with identity $1_R$. The addition and multiplication of $R$ will be denoted as $+$ and $*$ respectively.
 We use idempotent to mean an element $e\in R$ such that $e*e=e$.
Let ${\rm U}(R)$ be the group of all units of $R$.
The set ${\rm Aut}(R)$ of all ring automorphisms $R\rightarrow R$ forms a group under the operation of composition of functions.
Let ${\rm Spec} \ R$ be the spectrum of $R$, i.e., the set of all prime ideals. Let $\Psi$ be a subgroup of the group ${\rm Aut}(R)$.
Then the group $\Psi$ acts on the set ${\rm Spec} \ R$ given by:
$\psi P=\psi(P)\in {\rm Spec} \ R \  \mbox{ where } \psi\in \Psi \mbox{ and } P\in {\rm Spec} \ R.$
For any $P\in {\rm Spec} \ R$, let $$\mathscr{O}_P=\{Q\in {\rm Spec} \ R: Q=\psi P \mbox{ for some } \psi\in \Psi\}$$ be the orbit of $P$ under the action of $\Psi$ on ${\rm Spec} \ R$. Note that $$|\mathscr{O}_P|=\frac{|\Psi|}{|{\rm St}(P)|}$$ where $${\rm St}(P)=\{\psi\in \Psi: \psi(P)=P\}$$ is the stabilizer of $P$.

We also need to introduce notation and terminologies on sequences over rings and follow the notation of A. Geroldinger, D.J. Grynkiewicz and
others used for sequences over groups (cf. [\cite{Grynkiewiczmono}, Chapter 10] or [\cite{GH}, Chapter 5]).  For any nonempty subset $A$ of the ring $R$ (usually $A$ is taken to be $R$ or ${\rm U}(R)$ in this paper), let ${\cal F}(A)$
be the free commutative monoid, multiplicatively written, with basis
$A$. We denote multiplication in $\mathcal{F}(A)$ by the boldsymbol $\cdot$ and we use brackets for all exponentiation in $\mathcal{F}(A)$. By $T\in {\cal F}(A)$, we mean $T$ is a sequence of terms from $A$ which is
unordered, repetition of terms allowed. Say
$$T=a_1a_2\cdot\ldots\cdot a_{\ell}$$ where $a_i\in A$ for $i\in [1,\ell]$.
The sequence $T$ can be also denoted as $T=\mathop{\bullet}\limits_{a\in A}a^{[{\rm v}_a(T)]},$ where ${\rm v}_a(T)$ is a nonnegative integer and
means that the element $a$ occurs ${\rm v}_a(T)$ times in the sequence $T$. By $|T|$ we denote the length of the sequence, i.e., $|T|=\sum\limits_{a\in A}{\rm v}_a(T)=\ell.$ By $\varepsilon$ we denote the
{\sl empty sequence} in ${\cal F}(A)$ with $|\varepsilon|=0$. We call $T'$
a subsequence of $T$ if ${\rm v}_a(T')\leq {\rm v}_a(T)\ \ \mbox{for each element}\ \ a\in A,$ denoted by $T'\mid T,$ moreover, we write $T^{''}=T\cdot  T'^{[-1]}$ to mean the unique subsequence of $T$ with $T'\cdot T^{''}=T$.  We call $T'$ a {\sl proper} subsequence of $T$ provided that $T'\mid T$ and $T'\neq T$. In particular,  $\varepsilon$ is a proper subsequence of every nonempty sequence.
We call $T$ a {\bf $\Psi$-idempotent-product} sequence provided that there exists $\psi_1,\ldots,\psi_{\ell}\in \Psi$ (not necessarily distinct) such that $\prod\limits_{i=1}^{\ell} \psi_i(a_i)$ is an idempotent of $R$. We call $T$ a {\bf $\Psi$-idempotent-product-free} sequence provided that $T$ contains no nonempty subsequence which is $\Psi$-idempotent-product.

Note that the restriction $\psi\mid {\rm U}(R)$
is an automorphism of the group ${\rm U}(R)$. For convenience, in what follows we still use $\psi$ instead of $\psi\mid {\rm U}(R)$ to denote the automorphism of the group ${\rm U}(R)$. Then it does make a sense to say a sequence of terms from ${\rm U}(R)$ to be $\Psi-$idempotent-product or $\Psi-$idempotent-product free.

To give our main result, we still need the following notation. For any pair of positive integers $(m,h)$, let $${\rm T}(m; h)={\rm max} \sum\limits_{i=1}^h t_i, \ \ \ \ \ \ (t_i\geq 0)$$ where
\begin{equation}\label{equation iti<dh}
\sum\limits_{i=1}^d i t_i< d m \mbox{ for all } d=1,2,\ldots,h.
\end{equation}
In particular, ${\rm T}(1,h)=0$ for any positive integer $h$.
%One thing worthy remarking is that the quantity ${\rm T}(m; h)$ is computable by using the Greedy Algorithm.

\section{Results}

We start with the following easy proposition.

\begin{prop} For any integer $h\geq 1$, there exist $t_1,\ldots,t_{h}\geq 0$
such that $\sum\limits_{i=1}^d t_i=T(m,h)$ and \eqref{equation iti<dh} holds,
where $t_1,\ldots,t_{h}$ satisfy the following recurrence relation:

(i) $t_1=m-1$;

(ii) $t_d=\left\lfloor\frac{(dm-1)-\sum\limits_{i=1}^{d-1} i t_i}{d}\right\rfloor$ for all $d=2,\ldots,h$.
\end{prop}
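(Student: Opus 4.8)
The plan is to separate the statement into two assertions and prove each in turn: first that the values $t_1,\ldots,t_h$ produced by (i)--(ii) form a \emph{feasible} point (each $t_d\ge 0$ and \eqref{equation iti<dh} holds), and second that this feasible point is \emph{optimal}, i.e. $\sum_{i=1}^{h}t_i=T(m,h)$. Throughout I write $t^{*}=(t_1^{*},\ldots,t_h^{*})$ for the recurrence solution and set $S_d(s)=\sum_{i=1}^{d} i\,s_i$ for any candidate $s=(s_1,\ldots,s_h)$, with $S_d^{*}=S_d(t^{*})$. Feasibility is the easy half and I would handle it by induction on $d$: definition (ii) gives $d\,t_d^{*}\le (dm-1)-S_{d-1}^{*}$, hence $S_d^{*}=S_{d-1}^{*}+d\,t_d^{*}\le dm-1$, which is exactly the constraint of index $d$; running $d$ from $1$ to $h$ yields all of \eqref{equation iti<dh}. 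Nonnegativity of $t_d^{*}$ follows since the inductive hypothesis $S_{d-1}^{*}\le (d-1)m-1\le dm-1$ makes the numerator in (ii) nonnegative. (For $m=1$ the recurrence forces every $t_d^{*}=0$, recovering $T(1,h)=0$.) I would also record the a priori bound $i\,t_i\le S_i\le im-1$, so $t_i\le m-1$ for \emph{every} feasible point; this makes the feasible set finite and guarantees the maximum is attained.

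For optimality the strategy is an exchange (greedy-stays-ahead) argument. If $m=1$ the only feasible point is $0=t^{*}$, so assume $m\ge 2$. Among all optimal feasible points pick one, $s$, maximizing the potential $\Phi(s)=\sum_{i=1}^{h}(h-i)\,s_i$, which rewards mass at small indices. Suppose $s\ne t^{*}$ and let $k$ be the first index of disagreement. Since $s$ and $t^{*}$ agree below $k$, we have $S_{k-1}(s)=S_{k-1}^{*}$, so constraint $k$ forces $s_k\le\big\lfloor((km-1)-S_{k-1}^{*})/k\big\rfloor=t_k^{*}$, whence $s_k<t_k^{*}$. I would then try to increment $s_k$ by one. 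If all constraints survive, the objective strictly increases, contradicting optimality. Otherwise let $j>k$ be the least index whose constraint is violated by the increment, so $S_j(s)>jm-1-k$; combining this with $\sum_{i=1}^{k} i\,s_i=S_{k-1}^{*}+k\,s_k\le S_k^{*}-k\le km-1-k$ gives
\[
\sum_{i=k+1}^{j} i\,s_i \;=\; S_j(s)-\sum_{i=1}^{k} i\,s_i \;>\;(jm-1-k)-(km-1-k)\;=\;(j-k)m\;>\;0,
\]
so some index $i_0\in[k+1,j]$ has $s_{i_0}\ge 1$. Performing the single-unit swap $s_k\mapsto s_k+1,\ s_{i_0}\mapsto s_{i_0}-1$ leaves $\sum_i s_i$ unchanged, keeps every constraint (those of index below $j$ hold because $j$ was the first violation, while those of index at least $i_0$ only decrease as $i_0>k$), and strictly raises $\Phi$ by $i_0-k>0$, contradicting the maximality of $\Phi(s)$. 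Hence $s=t^{*}$ and $\sum_i t_i^{*}=T(m,h)$.

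The main obstacle is this optimality half, and specifically the integrality of the variables: one cannot continuously rebalance a fractional amount of weight from a high index down to $k$, so the exchange must transfer a full unit, which forces me to locate an index $i_0>k$ that is genuinely positive and lies at or below the first violated constraint. The displayed inequality $\sum_{i=k+1}^{j} i\,s_i>(j-k)m$ is exactly what supplies such an $i_0$, and the delicate bookkeeping is verifying that the one-unit swap preserves every intermediate constraint with index strictly between $k$ and $i_0$. The potential $\Phi$ plays no role beyond guaranteeing termination on the finite feasible set; any strictly decreasing weight sequence would serve equally well.
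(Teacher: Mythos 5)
Your proposal is correct, but there is nothing in the paper to compare it against: the author labels this ``the following easy proposition'' and supplies no proof at all, proceeding directly to the lemma that follows it. Your two-part argument is a complete justification. The feasibility half is routine, as you say: the floor in the recurrence gives $S_d^{*}\le dm-1$ by induction, and the same induction shows the numerator is nonnegative. The substance is the optimality half, and your exchange argument is sound. At the first index $k$ of disagreement the recurrence forces $s_k<t_k^{*}$; incrementing $s_k$ either preserves feasibility (contradicting optimality of $s$) or first fails at some index $j$, and the fact that $j>k$ --- i.e.\ that constraint $k$ itself survives the increment --- follows from $s_k+1\le t_k^{*}$ and is implicitly contained in your displayed bound $\sum_{i=1}^{k} i\,s_i\le km-1-k$. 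The inequality $\sum_{i=k+1}^{j} i\,s_i>(j-k)m>0$ then produces the positive coordinate $i_0\in[k+1,j]$ needed for the unit swap, and your bookkeeping for the swap is exactly right: constraints of index $d<j$ survive by the first-violation property (the decrement at $i_0$ can only help them), while constraints of index $d\ge i_0$ see the net change $k-i_0<0$. Your a priori observation that every feasible point has $t_i\le m-1$, so the feasible set is finite and both an optimal point and a $\Phi$-maximal optimal point exist, closes the one remaining loose end. In short, you have supplied a correct proof of a statement the paper asserts without proof.
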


\begin{lemma}\label{Lemma finite commutative rings} \ Let $R$ be a finite commutative ring with identity.
 Let $\Psi$ be a subgroup of the group ${\rm Aut}(R)$. Suppose that for every prime ideal $P$ with ${\rm Ind}(P)>1$, there exists some element $c_P\in P$ such that  $(c_P)+P^{{\rm Ind}(P)}=P$.
 Then
${\rm I}_{\Psi}(R)\geq {\rm D}_{\Psi}({\rm U}(R))+\sum\limits_{P\in {\rm Spec}(R)} \frac
{T\left({\rm Ind}(P); \  \frac{|\Psi|}{|{\rm St}(P)|}\right)} {\frac{|\Psi|}{|{\rm St}(P)|}}.$
\end{lemma}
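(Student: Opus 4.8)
The plan is to prove the lower bound by exhibiting a single $\Psi$-idempotent-product-free sequence over $R$ whose length is exactly one less than the claimed quantity; by Definition 3.1 this forces ${\rm I}_{\Psi}(R)$ to be at least the stated value. Throughout I would use that a finite commutative ring is Artinian, hence decomposes as a product of local rings $R\cong\prod_{P}R_P$ indexed by ${\rm Spec}(R)$ (all primes being maximal), that the idempotents of $R$ are exactly the vectors with each coordinate in $\{0,1\}$, and that each $\psi\in\Psi$ permutes the factors according to the action of $\Psi$ on ${\rm Spec}(R)$ while preserving the $P$-adic filtration and the invariant ${\rm Ind}(P)$. In particular ${\rm Ind}$ is constant on each orbit $\mathscr{O}_P$, and the displayed sum regroups over orbits as $\sum_{\mathscr{O}}{\rm T}({\rm Ind};|\mathscr{O}|)$, since an orbit of size $n=|\Psi|/|{\rm St}(P)|$ and index $m$ contributes $n\cdot\frac{{\rm T}(m;n)}{n}={\rm T}(m;n)$. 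Thus the target length is $\left({\rm D}_{\Psi}({\rm U}(R))-1\right)+\sum_{\mathscr{O}}{\rm T}(m_{\mathscr{O}};n_{\mathscr{O}})$.

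The sequence I would build is $S=W\boldsymbol{\cdot}\prod_{\mathscr{O}}U_{\mathscr{O}}$. Here $W$ is a sequence over ${\rm U}(R)$ of length ${\rm D}_{\Psi}({\rm U}(R))-1$ that is $\Psi$-idempotent-product-free as a sequence over the group of units; such $W$ exists by the definition of ${\rm D}_{\Psi}({\rm U}(R))$, because a weighted subproduct of units is a unit and the only idempotent unit is $1_R$. For each orbit $\mathscr{O}$ of size $n$ and index $m>1$, I would first invoke the hypothesis: $(c_P)+P^{{\rm Ind}(P)}=P$ forces $\mathfrak{m}_P=(c)$ in the local factor, so $R_P$ is a finite chain ring in which valuation is additive and every valuation-$1$ element is a uniformizer. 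Then for each $i$ with $1\le i\le n$ I place $t_i$ copies of an element $b_i\in R$ defined coordinatewise in $\prod_P R_P$: a valuation-$1$ element at each factor of a fixed $i$-subset $A_i\subseteq\mathscr{O}$, and a unit at every other factor. The multiplicities $t_1,\dots,t_n$ are those furnished by the preceding Proposition, so that $\sum_i t_i={\rm T}(m;n)$ and $\sum_{i=1}^{d} i\,t_i<dm$ for all $d\in[1,n]$. Orbits of index $1$ (field factors) contribute nothing, consistent with ${\rm T}(1;n)=0$ and with the hypothesis being required only for ${\rm Ind}(P)>1$.

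The heart of the argument is to show $S$ is $\Psi$-idempotent-product-free. Take any nonempty weighted subsequence with adversarially chosen weights and split it into its unit part $W'$ and its nilpotent part $U'$. Since $\psi(b_i)$ has valuation $1$ at each factor of $\psi(A_i)$ in the \emph{same} orbit and is a unit at the remaining factors, the $P$-adic valuation $c(P)$ of the weighted product equals the number of selected nilpotent chunks covering $P$ (units, including those coming from $W'$, do not change valuations). If $U'$ is empty the product is a unit, hence idempotent only if equal to $1_R$, which is excluded because $W$ is free. If $U'$ is nonempty, let $F$ be the nonempty set of factors with $c(P)\ge 1$; for the product to be idempotent, every such $P$ must satisfy $c(P)\ge m$ (coordinate $0$), since any $0<c(P)<m$ gives a nonzero nonunit coordinate. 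But then every selected chunk is supported inside $F$, so has type $i\le|F|=d$, whence the total coverage satisfies $\sum_{P\in F}c(P)\le\sum_{i=1}^{d} i\,t_i<dm$, contradicting $\sum_{P\in F}c(P)\ge dm$. Hence some factor has valuation strictly between $0$ and $m$ and the product is not idempotent.

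The main obstacle I anticipate is precisely this last counting step carried out in the presence of the weights: one must confirm that no matter how $\Psi$ routes the supports $\psi(A_i)$ among the factors of an orbit, the defect inequalities $\sum_{i\le d} i\,t_i<dm$ block the formation of an idempotent. Restricting the reachable supports to $\Psi$-orbits of subsets only helps, since the obstruction is a total-mass bound insensitive to which subsets are covered; the genuinely essential input is the chain-ring structure extracted from the hypothesis, which guarantees that valuations add and that a product of $c(P)$ valuation-$1$ elements is nonzero exactly when $c(P)<m$. Summing the orbit lengths ${\rm T}(m_{\mathscr{O}};n_{\mathscr{O}})$ together with $|W|={\rm D}_{\Psi}({\rm U}(R))-1$ yields a $\Psi$-idempotent-product-free sequence of the required length, and hence the claimed inequality for ${\rm I}_{\Psi}(R)$.
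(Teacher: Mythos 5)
Your proposal is, in substance, the paper's own proof translated into the language of the local decomposition $R\cong\prod_P R_P$: your coordinatewise elements (valuation $1$ on a fixed $i$-subset $A_i$ of an orbit, unit elsewhere, taken with the multiplicities $t_i$ from the Proposition) are exactly the paper's elements $b_t\in\mathscr{H}_{\mathscr{O};t}$, whose defining sets $H_{\mathscr{O};X}$ prescribe membership in ${\rm G}_{P_i}$ for $i\in X$ and congruence to $1_R$ modulo $Q^{{\rm Ind}(Q)}$ off $X$; your unit part $W$ is the paper's sequence $V$; your orbit regrouping of the bound is the paper's Claim F; and your coverage count against the defect inequalities is the paper's Claim G. Two remarks. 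First, where you invoke chain-ring structure (the hypothesis makes each local factor a finite local ring with principal maximal ideal, hence a chain ring in which valuations add), the paper instead proves the needed fact directly by an elementary exchange/maximality argument (its Claim B: a product of at most ${\rm Ind}(P)-1$ elements of ${\rm G}_P$ lies outside $P^{{\rm Ind}(P)}$); your route is cleaner but leans on the standard classification, while the paper's is self-contained. Second, the one step you must repair: the inequality $\sum_{P\in F}c(P)\le\sum_{i=1}^{d}i\,t_i<dm$ with $d=|F|$ is ill-formed when the selected nilpotent chunks come from more than one orbit, because then $F$ meets several orbits and $m$ and the $t_i$ are orbit-dependent quantities (indeed $t_i$ is undefined for $i$ exceeding an orbit's size). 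The fix is immediate, and is how the paper organizes Claims G and H: since each chunk's support lies inside a single orbit, pick any orbit $\mathscr{O}$ (size $n$, index $m$) meeting $F$ and count only there; chunks from $\mathscr{O}$ are supported in $F\cap\mathscr{O}$, so with $d=|F\cap\mathscr{O}|\le n$ you get $\sum_{P\in F\cap\mathscr{O}}c(P)\le\sum_{i=1}^{d}i\,t_i<dm$, while idempotency forces $c(P)\ge m$ for every $P\in F\cap\mathscr{O}$, i.e. $\sum_{P\in F\cap\mathscr{O}}c(P)\ge dm$, the desired contradiction. With that localization your argument is complete and matches the paper's.
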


\begin{proof}   Denoted
\begin{equation}\label{equation the definition of hole}
{\rm G}_P=\{c\in P: (c)+P^{{\rm Ind}(P)}=P \},
\end{equation}
by the hypothesis of the lemma, we have
\begin{equation}\label{equation holeP is not empty}
{\rm G}_P\neq \emptyset.
\end{equation}

\noindent \textbf{Claim A.} Let $P\in {\rm Spec} \ R$ and $\psi\in \Psi$. Then,

(i) $\psi(P)^t=\psi(P^t)$ for all $t\geq 0$, and in particular, ${\rm Ind}(\psi(P))={\rm Ind}(P)$;

(ii) $\psi(G_p)=G_\psi(P)$.

\noindent {\sl Proof of Claim A.}

(i) Trivial.

(ii) Take an arbitrary element $a\in G_p$.  By \eqref{equation the definition of hole} and Conclusion (i), we have that $(\psi(a))+\psi(P)^{{\rm Ind}(\psi(P))}=(\psi(a))+\psi(P)^{{\rm Ind}(P)}=(\psi(a))+\psi(P^{{\rm Ind}(P)})=\psi((a))+\psi\left(P^{{\rm Ind}(P)}\right)=\psi\left((a)+P^{{\rm Ind}(P)}\right)=\psi(P)$. Since $\psi(a)\in \psi(G_p)\subset \psi(P)$, it follows that $\psi(a)\in G_{\psi(P)}$.  By the arbitrariness of choosing $a$, we have $\psi(G_p)\subset G_{\psi(P)}$. Since $\psi^{-1}\in \Psi$, it follows that $G_{\psi(P)}=\psi(\psi^{-1}(G_{\psi(P)}))\subset \psi(G_{\psi^{-1}(\psi(P))})= \psi(G_P)$, and thus, $\psi(G_p)=G_{\psi(P)}$. \qed

%The conclusion $\psi(P)^t=\psi(P^t)$ is trivial. We need only to show that

%$G_p=\psi^{-1}(\psi(G_p))\subset \psi^{-1}(G_{\psi(P)})\subset G_{\psi^{-1}(\psi(P))}=G_P$,

\noindent \textbf{Claim B.} Let $P\in {\rm Spec} \ R$ be such that ${\rm Ind}(P)>1$. Let $\ell\in [1,{\rm Ind}(P)-1]$ and $a_1,\ldots,a_{\ell}\in G_P$ (not necessarily distinct). Then $\prod\limits_{i=1}^{\ell} a_i \notin P^{{\rm Ind}(P)}.$

\noindent {\sl Proof of Claim B.} It suffices to consider the case that $\ell={\rm Ind}(P)-1.$
 Since $P^ {{\rm Ind}(P)}\subsetneq  P^ {{\rm Ind}(P)-1}$, we can take some element $x$ of $P^ {{\rm Ind}(P)-1}\setminus   P^ {{\rm Ind}(P)}$. Since $x$ is a finite sum of products of the form $b_{1}*b_{2}*\cdots *b_{{\rm Ind}(P)-1}$ where $b_{1}, b_{2},\ldots, b_{{\rm Ind}(P)-1}\in P$, it follows
that
\begin{equation}\label{equation sequence over Pwith}
\{T\in {\cal F}(P):  |T|={\rm Ind}(P)-1 \mbox{ and } \pi(T)\in P^ {{\rm Ind}(P)-1}\setminus   P^ {{\rm Ind}(P)}\}\neq \emptyset.
\end{equation}
Then we take a sequence $y_1\cdot \ldots\cdot y_{{\rm Ind}(P)-1} \in {\cal F}(P)$ in the set given as \eqref{equation sequence over Pwith} such that
the number of common terms of sequences $y_1\cdot \ldots\cdot y_{{\rm Ind}(P)-1}$ and $a_1\cdot \ldots\cdot a_{{\rm Ind}(P)-1}$ is {\bf maximal},
say
\begin{equation}\label{equation y1=a1,...yt=at}
y_i=a_i \ \mbox{ for each } \ i\in [1,s]
\end{equation}
(with $s \in [0,{\rm Ind}(P)-1]$ being maximal), moveover, both sequences $y_{s+1}\cdot\ldots\cdot y_{{\rm Ind}(P)-1}$ and $a_{s+1}\cdot\ldots\cdot a_{{\rm Ind}(P)-1}$ have no common terms.
To prove Claim B, we need only to show that $s={\rm Ind}(P)-1$.
Assume to the contrary that $$s<{\rm Ind}(P)-1.$$
Let $z=\prod\limits_{i\in [1,{\rm Ind}(P)-1]\setminus \{s+1\}} y_i.$
Since $y_{s+1}*z=y_{s+1}*\prod\limits_{i\in [1,{\rm Ind}(P)-1]\setminus \{s+1\}} y_i=\prod\limits_{i\in [1,{\rm Ind}(P)-1]} y_i\notin P^{{\rm Ind}(P)}$, it follows that
$y_{s+1}\in P\setminus  (P^{{\rm Ind}(P)}: z)$, equivalently,
 \begin{equation}\label{equation (PindP:b)capPinP}
 (P^{{\rm Ind}(P)}: z)\cap P\subsetneq P.
 \end{equation}
Since $a_{s+1}\in {\rm G}_P$, then $(a_{s+1})+P^{{\rm Ind}(P)}=P$. Combined with
 \eqref{equation (PindP:b)capPinP}, we conclude that $a_{s+1}\notin (P^{{\rm Ind}(P)}: z)$. It follows from \eqref{equation y1=a1,...yt=at} that
$\left(\prod\limits_{i\in [1, s+1]} a_i\right) * \left(\prod\limits_{i\in [s+2, {\rm Ind}(P)-1]}  y_i\right)=a_{s+1}* \left(\prod\limits_{i\in [1, s]} a_i\right) * \left(\prod\limits_{i\in [s+2, {\rm Ind}(P)-1]} y_i\right)=a_{s+1}* \left(\prod\limits_{i\in [1, s]} y_i\right) * \left(\prod\limits_{i\in [s+2, {\rm Ind}(P)-1]} y_i\right)=a_{s+1}* \left(\prod\limits_{i\in [1, {\rm Ind}(P)-1]\setminus \{s+1\}} y_i\right) =a_{s+1}*z\notin P^{{\rm Ind}(P)}$. Obviously, $\left(\prod\limits_{i\in [1, s+1]} a_i\right) * \left(\prod\limits_{i\in [s+2, {\rm Ind}(P)-1]}  y_i\right)\in P^{{\rm Ind}(P)-1}$, and so $\left(\prod\limits_{i\in [1, s+1]} a_i\right) * \left(\prod\limits_{i\in [s+2, {\rm Ind}(P)-1]}  y_i\right)\in P^{{\rm Ind}(P)-1}\setminus P^{{\rm Ind}(P)}$ which implies that the sequence $$a_1\cdot\ldots\cdot a_{s+1}\cdot y_{s+2}\cdot\ldots\cdot y_{{\rm Ind}(P)-1}\in \{T\in {\cal F}(P):  |T|={\rm Ind}(P)-1 \mbox{ and } \pi(T)\in P^ {{\rm Ind}(P)-1}\setminus   P^ {{\rm Ind}(P)}\}.$$ Then we derive a contradiction with the choosing of the sequence $y_1\cdot\ldots\cdot y_{{\rm Ind}(P)-1}$. This proves Claim B. \qed

Let $\mathscr{O}$ be
an arbitrary orbit of ${\rm Spec}  R$ under the action of $\Psi$.
Say,
\begin{equation}\label{equation orbit O}
\mathscr{O}=\{P_1,P_2,\ldots,P_{h}\} \ \mbox{ where } \ h=\frac{|\Psi|}{|{\rm St}(P_1)|}.
\end{equation}
For any nonempty subset $X\subset [1,h]$, we define
\begin{equation}\label{equation def H(O;X)}
H_{\mathscr{O}; X}=\left\{a\in R:  a\equiv 1_R  {\pmod {Q^{{\rm Ind}(Q)}}} \mbox{ for all }  Q\in {\rm Spec} R\setminus  \{P_i: i\in X\}\right\}\cap\left(\bigcap\limits_{i\in X}{\rm G}_{P_i}\right).
\end{equation}
For any $t\in [1,h]$, let
\begin{equation}\label{equation def H(O;t)}
\mathscr{H}_{\mathscr{O};  t}=\bigcup\limits_{\stackrel{X\subset [1,h]}{|X|=t}} H_{\mathscr{O}; X}.
\end{equation}

In the following, we shall give three claims on the properties  of $H_{\mathscr{O}; X}$, $\mathscr{H}_{\mathscr{O};  t}$ given as above.

\noindent\textbf{Claim C.}  For any element $b\in H_{\mathscr{O}; X}$, we have $\{i\in [1,h]: b\in P_i\}=X.$

\noindent {\sl Proof of Claim C.} By \eqref{equation the definition of hole} and \eqref{equation def H(O;X)}, we see that $b\in P_i$ for each $i\in X$. For any $Q\in {\rm Spec} R\setminus  \{P_i: i\in X\}$, since $b\equiv 1_R \pmod {Q^{{\rm Ind}(Q)}}$, then  $b\equiv 1_R \pmod Q$ and so $b\notin Q$, done. \qed

\noindent \textbf{Claim D.} $H_{\mathscr{O}; X}\neq \emptyset$ for any nonempty set $X\subset [1,h]$. In particular,  $\mathscr{H}_{\mathscr{O}; t}\neq \emptyset$ for each $t\in [1,h]$.

\noindent {\sl Proof of Claim D.} \ By \eqref{equation holeP is not empty},  we take an element
\begin{equation}\label{equation xi in HolePi}
c_i\in {\rm G}_{P_i} \mbox{ for each } i\in X.
\end{equation}
 Note that $\{P^{{\rm Ind}(P)}: P\in {\rm Spec} R\}$ is a family of ideals which are pairwise coprime. By the Chinese Remainder Theorem, we can find one element $a$ such that
\begin{equation}\label{equation a equiv xi mod piindPi for iin[1,k]}
a\equiv c_i \pmod {P_i^{{\rm Ind}(P_i)}} \mbox{ for each } i\in X
\end{equation}
and
\begin{equation}\label{equation a equiv 1mod Q in ClaD}
a\equiv 1_R \pmod {Q^{{\rm Ind}(Q)}} \mbox{ for each } Q\in {\rm Spec} R\setminus  \{P_i: i\in X\}.
\end{equation}
To derive $H_{\mathscr{O}; X}\neq \emptyset$, by \eqref{equation a equiv 1mod Q in ClaD}
 we need only to show that $a\in \bigcap\limits_{i\in X} G_{P_i}$.

Let $i\in X$. It follows from \eqref{equation a equiv xi mod piindPi for iin[1,k]} that
\begin{equation}\label{equation a=xi+v}
c_i=a+v_i \mbox{ for some } v_i\in P_i^{{\rm Ind}(P_i)}.
\end{equation}
Since $G_{P_i}\subset P_i$, it follows from \eqref{equation xi in HolePi} and \eqref{equation a=xi+v} that $a\in P_i$. Then it suffices to show that $(a)+P_i^{{\rm Ind}(P_i)}=P_i$.
Take an arbitrary element $b\in P_i$. By \eqref{equation the definition of hole} and \eqref{equation xi in HolePi}, we derive that $b=r_b c_i+u_b$ where $r_b \in R$ and $u_b\in P_i^{{\rm Ind}(P_i)}$. Combined with \eqref{equation a=xi+v}, we have that $b=r_b(a+v_i)+u_b=r a+(r_b v_i +u_b)\in (a)+P_i^{{\rm Ind}(P_i)}$. By the arbitrariness of choosing $b$, we proved $(a)+P_i^{{\rm Ind}(P_i)}=P_i$ and so $H_{\mathscr{O}; X}\neq \emptyset$. Then $\mathscr{H}_{\mathscr{O};t}\neq \emptyset$ follows from \eqref{equation def H(O;t)} trivially. \qed

\noindent \textbf{Claim E.}  For any $t\in [1,h]$ and any $\psi\in \Psi$, we have $\psi(\mathscr{H}_{\mathscr{O}; t})=\mathscr{H}_{\mathscr{O}; t}.$

\noindent {\sl Proof of Claim E.}  Take an arbitrary element $a\in \mathscr{H}_{\mathscr{O};t}$, equivalently, $a\in H_{\mathscr{O}; X}$ for some $X\subset [1,h]$ with $|X|=t$. Since $\psi$ acts on ${\rm Spec} R$, it follows from \eqref{equation orbit O} that there exists $X'\subset [1,h]$ of cardinality $|X'|=|X|=t$ such that
  \begin{equation}\label{equation psi(Pi)inX=PjinX'}
 \{\psi(P_i):i\in X\}= \{P_j:j\in X'\},
 \end{equation}
 and follows from Claim A (i) that \begin{equation}\label{equation Qindx=Q'index}
 \{\psi(Q)^{{\rm Ind}(\psi(Q))}:  Q\in {\rm Spec} R\setminus  \{P_i: i\in X\}\}=\{Q'^{{\rm Ind}(Q')}:  Q'\in {\rm Spec} R\setminus  \{P_j: j\in X'\}\}.
 \end{equation}
By \eqref{equation def H(O;X)} and Claim A (i),   for all $Q\in {\rm Spec} R\setminus  \{P_i: i\in X\}$, we have that $\psi(a)-1_R=\psi(a)-\psi(1_R)=\psi(a-1_R)\in \psi(Q^{{\rm Ind}(Q)})=\psi(Q)^{{\rm Ind}(Q)}=\psi(Q)^{{\rm Ind}(\psi(Q))}$, i.e.,  $\psi(a)\equiv 1_R \pmod {\psi(Q)^{{\rm Ind}(\psi(Q))}}$.
Combined with \eqref{equation Qindx=Q'index}, we conclude that $$\psi(a)\equiv 1_R \pmod {Q'^{{\rm Ind}(Q')}} \mbox{ for all } Q'\in {\rm Spec} R\setminus  \{P_j: j\in X'\}\}.$$
Since $a\in \bigcap\limits_{i\in X}{\rm G}_{P_i}$, it follows from \eqref{equation psi(Pi)inX=PjinX'} and Claim B that $\psi(a)\in  \psi(\bigcap\limits_{i\in X}{\rm G}_{P_i})=\bigcap\limits_{i\in X}\psi({\rm G}_{P_i})=\bigcap\limits_{j\in X'}{\rm G}_{P_j}.$ Then we conclude that $\psi(a)\in H_{\mathscr{O}; X'}\subset \mathscr{H}_{\mathscr{O};t}$, completing the proof of Claim D. \qed

Let $\mathscr{O}$ be the orbit given as \eqref{equation orbit O}. By $B_{\mathscr{O}}$ we denote the sequence associated with the orbit $\mathscr{O}$ which are given as below.
By Claim D, we choose an element
\begin{equation}\label{equation btin H(O,t)}
b_t\in \mathscr{H}_{\mathscr{O};t} \mbox{ for each } t\in [1,h].
\end{equation}
Let $d_1,d_2,\ldots,d_{h}$ be positive integers such that
\begin{equation}\label{equation sum di=T()}
\sum\limits_{i=1}^h d_i=T({\rm Ind}(P_1); \  h)
\end{equation}
and
\begin{equation}\label{equation tdt<n IndP1}
\sum\limits_{i=1}^n i d_i<n \ {\rm Ind}(P_1)  \mbox{ for all } n=1,2,\ldots,h.
\end{equation}
 Then we set
\begin{equation}\label{equation sequence BO}
B_{\mathscr{O}}=b_1^{[d_1]}\cdot\ldots\cdot b_h^{[d_h]}.
\end{equation}
One thing worth remarking is that all the quantities $h,d_1,d_2,\ldots,d_{h}$ in \eqref{equation sequence BO} varies with $\mathscr{O}$ taking distinct orbits, and thus,  the length  $|B_{\mathscr{O}}|$ varies accordingly. Precisely, we have the following.

\noindent \textbf{Claim F.} $|B_{\mathscr{O}}|=\sum\limits_{P\in \mathscr{O}} \frac
{T\left({\rm Ind}(P); \  \frac{|\Psi|}{|{\rm St}(P)|}\right)} {\frac{|\Psi|}{|{\rm St}(P)|}}$.

\noindent {\sl Proof of Claim F.} Note that ${\rm St}(P_1)=\cdots={\rm St}(P_h)$. By Claim A, we have ${\rm Ind}(P_1)=\cdots={\rm Ind}(P_h)$. Then it follows from \eqref{equation orbit O}, \eqref{equation sum di=T()} and \eqref{equation sequence BO} that $|B_{\mathscr{O}}|=T({\rm Ind}(P_1); \  h)=\sum\limits_{i=1}^h \frac{T({\rm Ind}(P_1); \  h)}{h}=\sum\limits_{i=1}^h \frac{T({\rm Ind}(P_1); \  \frac{|\Psi|}{|{\rm St}(P_1)|})}{\frac{|\Psi|}{|{\rm St}(P_1)|}}=\sum\limits_{i=1}^h \frac{T({\rm Ind}(P_i); \  \frac{|\Psi|}{|{\rm St}(P_i)|})}{\frac{|\Psi|}{|{\rm St}(P_i)|}}=\sum\limits_{P\in \mathscr{O}} \frac
{T\left({\rm Ind}(P); \  \frac{|\Psi|}{|{\rm St}(P)|}\right)} {\frac{|\Psi|}{|{\rm St}(P)|}}$. This proves Claim F.
\qed

\noindent \textbf{Claim G.} For any nonempty subsequence $W$ of $B_{\mathscr{O}}$, say $W=a_1\cdot\ldots\cdot a_{\ell}$, and for any $\psi_1,\ldots,\psi_{\ell}\in \Psi$ (not necessarily distinct), there exists $r\in [1,h]$ such that $\prod\limits_{i=1}^{\ell}\psi_i(a_i)\in P_r\setminus P_r^{{\rm Ind}(P_r)}$.

\noindent {\sl Proof of Claim G.} By \eqref{equation btin H(O,t)}, \eqref{equation sequence BO} and  Claim E, we have that \begin{equation}\label{equation psim(am)in i=1tohH}
\psi_m(a_m)\in \bigcup\limits_{i=1}^{h}\mathscr{H}_{\mathscr{O};i} \mbox{ for each } m\in [1,\ell].
\end{equation}
Let
$n=\sharp\{s\in [1,h]: \prod\limits_{m=1}^{\ell}\psi_m(a_m)\in P_s\}.$ Obviously, $1\leq n\leq h$.
By arranging the indices of $(P_1,\ldots,P_h)$ if necessary, we can assume without loss of generality that
\begin{equation}\label{equation productin[1ton]}
\{s\in [1,h]: \prod\limits_{m=1}^{\ell}\psi_m(a_m)\in P_s\}=[1,n].
\end{equation}
Then
\begin{equation}\label{equation each in at most n primes}
\{s\in [1,h]: \psi_m(a_m)\in P_s\}\subset [1,n] \  \mbox{ for each } m\in [1,\ell].
\end{equation}
By \eqref{equation psim(am)in i=1tohH}, \eqref{equation each in at most n primes} and Claim C, we derive that
\begin{equation}\label{equation psimamini=1tonHOi}
\psi_m(a_m)\in \bigcup\limits_{t=1}^{n}\mathscr{H}_{\mathscr{O};t} \mbox{ for each } m\in [1,\ell].
\end{equation}
By  \eqref{equation tdt<n IndP1}, \eqref{equation sequence BO}, \eqref{equation each in at most n primes}, \eqref{equation psimamini=1tonHOi}, Claim C and Claim E, we conclude that
$$\begin{array}{llll}
& &\sum\limits_{s=1}^{n} \sharp\{m\in [1,\ell]: \psi_m(a_m)\in P_s\}\\
&=&\sum\limits_{s=1}^{h} \sharp\{m\in [1,\ell]: \psi_m(a_m)\in P_s\}\\
&=&\sum\limits_{m=1}^{\ell} \sharp\{s\in [1,h]: \psi_m(a_m)\in P_s\}\\
&=& \sum\limits_{t=1}^{n} (t  \times \sharp\{m\in [1,\ell]: \psi_m(a_m)\in \mathscr{H}_{\mathscr{O};t}\})\\
&=&\sum\limits_{t=1}^n  (t  \times \sharp\{m\in [1,\ell]: a_m\in \mathscr{H}_{\mathscr{O};t}\})\\
&\leq & \sum\limits_{t=1}^n  t d_t\\
&<& n \ {\rm Ind}(P_1).\\
\end{array}$$
Combined with Claim A, we derive that there exists
\begin{equation}\label{equation rin[1ton]}
r\in [1,n]
\end{equation} such that $\sharp\{m\in [1,\ell]: \psi_m(a_m)\in P_r\}<{\rm Ind}(P_1)={\rm Ind}(P_r)$. Moreover, since $P_r$ is prime, it follows from \eqref{equation productin[1ton]} that
$\sharp\{m\in [1,\ell]: \psi_m(a_m)\in P_r\}\geq 1,$ and so, $$\sharp\{m\in [1,\ell]: \psi_m(a_m)\in P_r\}\in [1, {\rm Ind}(P_r)-1].$$
By arranging the indices of $[1,\ell]$ if necessary, we can assume without loss of generality that there exists some
\begin{equation}\label{equation uin[1,indpr-1]}
u\in [1,{\rm Ind}(P_r)-1]
\end{equation}
such that
\begin{equation}\label{equation twocasepsim(am)}
\begin{array}{llll} &\left
\{\begin{array}{llll}
               \psi_m(a_m)\in P_r, & \mbox{ if } m=1,\ldots,u; \\
               \psi_m(a_m)\notin P_r, & \mbox{ if } m=u+1,\ldots,\ell. \\
                       \end{array}
           \right. \\
\end{array}
\end{equation}
By \eqref{equation def H(O;X)}, \eqref{equation def H(O;t)}, \eqref{equation each in at most n primes},  \eqref{equation twocasepsim(am)} and Claim C, we conclude that
$$\begin{array}{llll} &\left
\{\begin{array}{llll}
               \psi_m(a_m)\in {\rm G}_{P_r}, & \mbox{ if } m=1,\ldots,u; \\
               \psi_m(a_m)\equiv 1_R \pmod{P_r^{{\rm Ind}(P_r)}}, & \mbox{ if } m=u+1,\ldots,\ell. \\
                       \end{array}
           \right. \\
\end{array}$$
It follows that $$\prod\limits_{m=1}^{\ell} \psi_m(a_m)=(\prod\limits_{m=1}^{u} \psi_m(a_m))\cdot (\prod\limits_{m=u+1}^{\ell} \psi_m(a_m))\equiv (\prod\limits_{m=1}^{u} \psi_m(a_m))\cdot 1_R=\prod\limits_{m=1}^{u} \psi_m(a_m) \pmod{P_r^{{\rm Ind}(P_r)}},$$
and
follows from \eqref{equation uin[1,indpr-1]} and Claim B that $$\prod\limits_{m=1}^{u} \psi_m(a_m) \not\equiv 0\pmod{P_r^{{\rm Ind}(P_r)}}.$$
Therefore, it follows from \eqref{equation productin[1ton]} and \eqref{equation rin[1ton]} that $\prod\limits_{m=1}^{\ell} \psi_m(a_m)\in P_r\setminus P_r^{{\rm Ind}(P_r)}$. This proves Claim G.
\qed

Take a $\Psi$-product-one free sequence $V$ of terms from the group ${\rm U}(R)$ with length
\begin{equation}\label{equation |V|_D-1}
|V|={\rm D}_{\Psi}({\rm U}(R))-1.
\end{equation}
Suppose that $\mathscr{O}_1,\ldots,\mathscr{O}_k$ are all distinct orbits of ${\rm Spec}  R$ under the action of $\Psi$. Let
\begin{equation}\label{equation T=Bo1BokV}
T= B_{\mathscr{O}_1}\cdot\ldots\cdot B_{\mathscr{O}_k}\cdot V.
\end{equation}

 \noindent \textbf{Claim H.} The sequence $T$ is $\Psi$-idempotent-product free.

\noindent {\sl Proof of Claim H.} Suppose to the contrary that there exists a nonempty subsequence $T'$ of $T$, say $T'=a_1\cdot \ldots\cdot a_{\ell}$,  such that $T'$ is a $\Psi$-idempotent-product sequence. Since $1_R$ is the unique idempotent  in ${\rm U}(R)$ and V is $\Psi$-product-one free, it follows  that at least one term of the sequence $B_{\mathscr{O}_1}\cdot\ldots\cdot B_{\mathscr{O}_k}$ appears in the sequence $T'$. Then by rearranging the indices $i\in [1,\ell]$ we may assume without loss of generality that
$$a_1\cdot\ldots\cdot a_s\mid B_{\mathscr{O}_1},$$
$$a_{s+1}\cdot\ldots\cdot a_{\ell} \mid B_{\mathscr{O}_2}\cdot\ldots\cdot B_{\mathscr{O}_k}\cdot V,$$
where $1\leq s\leq \ell$.
Since  $T'$ is a $\Psi$-idempotent-product sequence, there exists $\psi_1,\ldots,\psi_{\ell}\in \Psi$ such that $\prod\limits_{i=1}^{\ell}\psi_i(a_i)$ is an idempotent. By Claim G, there exists some prime ideal $P\in\mathscr{O}_1$ such that  $\prod\limits_{i=1}^{s}\psi_i(a_i)\in P\setminus P^{{\rm Ind}(P)}$. By \eqref{equation def H(O;X)}, we see that  $\psi_i(a_i)\equiv 1_R \pmod {P^{{\rm Ind}(P)}}$ or $\psi_i(a_i)\in {\rm U}(R)$ according to $a_i\mid B_{\mathscr{O}_2}\cdot\ldots\cdot B_{\mathscr{O}_k}$ or $a_i\mid V$ respectively, where $i\in [s+1,\ell]$.  Then $\prod\limits_{i=1}^{\ell}\psi_i(a_i)\in P\setminus P^{{\rm Ind}(P)}$ still holds. This is a contradiction with $\prod\limits_{i=1}^{\ell}\psi_i(a_i)$ being idempotent, since  $\prod\limits_{i=1}^{\ell}\psi_i(a_i)=(\prod\limits_{i=1}^{\ell}\psi_i(a_i))^{{\rm Ind}(P)}\in P^{{\rm Ind}(P)}$. This proves Claim H. \qed

By \eqref{equation |V|_D-1}, \eqref{equation T=Bo1BokV}, Claim F and Claim H,  we conclude that ${\rm I}_{\Psi}(\mathcal{S}_R)\geq 1+|T|=1+|V|+\sum\limits_{i=1}^k |B_{\mathscr{O}_i}|={\rm D}_{\Psi}({\rm U}(R))+\sum\limits_{i=1}^k\sum\limits_{P\in \mathscr{O}_i} \frac
{T\left({\rm Ind}(P); \  \frac{|\Psi|}{|{\rm St}(P)|}\right)} {\frac{|\Psi|}{|{\rm St}(P)|}}
={\rm D}_{\Psi}({\rm U}(R))+\sum\limits_{P\in {\rm Spec}(R)} \frac
{T\left({\rm Ind}(P); \  \frac{|\Psi|}{|{\rm St}(P)|}\right)} {\frac{|\Psi|}{|{\rm St}(P)|}}.$ This complete the proof of the lemma. \end{proof}

\begin{theorem}\label{Theorem finite commutative rings} \ Let $R$ be a finite commutative principal ideal ring with identity. Let $\Psi$ be a subgroup of the group ${\rm Aut}(R)$. Then
$${\rm I}_{\Psi}(\mathcal{S}_R)\geq {\rm D}_{\Psi}({\rm U}(R))+\sum\limits_{P\in spec(R)} \frac
{T\left({\rm Ind}(P); \  \frac{|\Psi|}{|{\rm St}(P)|}\right)} {\frac{|\Psi|}{|{\rm St}(P)|}}.
$$
\end{theorem}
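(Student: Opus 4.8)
The plan is to deduce the theorem directly from Lemma \ref{Lemma finite commutative rings}: since the right-hand side of the asserted inequality is verbatim the right-hand side of the lemma, the entire task reduces to checking that a finite commutative principal ideal ring $R$ meets the single standing hypothesis of that lemma, namely that for every prime ideal $P$ with ${\rm Ind}(P)>1$ there exists an element $c_P\in P$ satisfying $(c_P)+P^{{\rm Ind}(P)}=P$.

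First I would let $P$ be an arbitrary prime ideal of $R$. Because $R$ is a principal ideal ring, $P$ is principal, say $P=(c_P)$ for some $c_P\in R$; and since $R$ has an identity, $c_P=1_R*c_P\in (c_P)=P$, so $c_P\in P$ as demanded. Then, using $P^{{\rm Ind}(P)}\subseteq P$, I would compute $(c_P)+P^{{\rm Ind}(P)}=P+P^{{\rm Ind}(P)}=P$. This confirms the hypothesis of the lemma for every prime ideal of $R$, a fortiori for those with ${\rm Ind}(P)>1$.

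Invoking Lemma \ref{Lemma finite commutative rings} then yields $ {\rm I}_{\Psi}(\mathcal{S}_R)\geq {\rm D}_{\Psi}({\rm U}(R))+\sum_{P\in {\rm Spec}(R)}\frac{T\left({\rm Ind}(P);\,|\Psi|/|{\rm St}(P)|\right)}{|\Psi|/|{\rm St}(P)|}$, which is precisely the claimed bound (here ${\rm I}_{\Psi}(\mathcal{S}_R)$ and ${\rm I}_{\Psi}(R)$ denote the same invariant of the multiplicative semigroup of $R$). I do not expect any genuine obstacle at this stage: the substantive combinatorial work—constructing the orbit-indexed witness sequence $B_{\mathscr{O}}$ and establishing Claims A--H, which guarantee that the assembled sequence $T$ is $\Psi$-idempotent-product free—has already been discharged inside the lemma. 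The only conceptual point worth flagging is that the lemma's hypothesis is strictly weaker than global principality: it asks merely that each $P$ be principal \emph{modulo} $P^{{\rm Ind}(P)}$, i.e.\ that $P/P^{{\rm Ind}(P)}$ be cyclic. A principal ideal ring satisfies this trivially, which is exactly why the theorem is an immediate specialization of the lemma.
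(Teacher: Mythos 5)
Your proposal is correct and matches the paper's (implicit) argument: the paper states this theorem without a separate proof precisely because it is the immediate specialization of Lemma \ref{Lemma finite commutative rings}, whose hypothesis you verify exactly as intended --- in a principal ideal ring each prime $P=(c_P)$ gives $(c_P)+P^{{\rm Ind}(P)}=P+P^{{\rm Ind}(P)}=P$. Your closing observation, that the lemma only needs $P$ to be principal modulo $P^{{\rm Ind}(P)}$, is also the very point the paper exploits later to extend the result to finite quotients of Dedekind domains in Theorem \ref{Theorem Dedekind rings}.
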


It is not hard to check that the following proposition holds, i.e., the bound in Theorem \ref{Theorem finite commutative rings} is best possible in general.

\begin{prop} Let $L$ be a finite commutative local P.I.R, and let $R=L\times L$. Let $\varphi:R\rightarrow R$ be an automorphism given as $\varphi:(a,b)\mapsto (b,a)$ for any $(a,b)\in R$. Let $H=\langle \varphi\rangle$ be the group of automorphisms of order two generated by $\varphi$. Then
${\rm I}_{\Psi}(\mathcal{S}_R)={\rm D}_{\Psi}({\rm U}(R))+\sum\limits_{P\in spec(R)} \frac
{T\left({\rm Ind}(P); \  \frac{|\Psi|}{|{\rm St}(P)|}\right)} {\frac{|\Psi|}{|{\rm St}(P)|}}.
$
\end{prop}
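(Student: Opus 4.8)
The inequality $\ge$ is immediate: $R=L\times L$ is a finite commutative principal ideal ring (a finite direct product of finite chain rings), so Theorem~\ref{Theorem finite commutative rings} applies verbatim with $\Psi=H$ and yields the stated lower bound. Thus the whole content is the reverse inequality, and I would first pin down the arithmetic of $R$. Write $\mathfrak m=(\pi)$ for the maximal ideal of the local ring $L$, let $k={\rm Ind}(\mathfrak m)$ be its nilpotency index (so $\mathfrak m^{k}=0\ne \mathfrak m^{k-1}$), and let $v$ denote the associated valuation on $L$. Then ${\rm Spec}(R)=\{P_1,P_2\}$ with $P_1=\mathfrak m\times L$ and $P_2=L\times \mathfrak m$; the automorphism $\varphi$ interchanges $P_1$ and $P_2$, so $\{P_1,P_2\}$ is a single $\Psi$-orbit with trivial stabiliser and ${\rm Ind}(P_1)={\rm Ind}(P_2)=k$. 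Consequently the right-hand sum collapses to ${\rm T}(k;2)=(k-1)+\lfloor k/2\rfloor$ (by the recurrence of the preceding proposition). Finally the idempotents of $R$ are exactly $(0,0),(1,0),(0,1),(1,1)$, the units are ${\rm U}(L)\times {\rm U}(L)$, and $\varphi$ acts by swapping the two coordinates.

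To prove $\le$ I would take any sequence $T$ over $R$ with $|T|={\rm D}_{\Psi}({\rm U}(R))+{\rm T}(k;2)$ and record, for each term $a=(x,y)$, the valuation pair $(v(x),v(y))$, remembering that applying $\varphi$ exchanges the two entries. Let $u$ be the number of unit terms. If $u\ge {\rm D}_{\Psi}({\rm U}(R))$, then the definition of the $\Psi$-weighted Davenport constant of ${\rm U}(R)$ produces a nonempty weighted subproduct of the unit terms equal to $1_R=(1,1)$, which is idempotent, and we are done. So the substance is the case $u\le {\rm D}_{\Psi}({\rm U}(R))-1$, in which there are at least ${\rm T}(k;2)+1$ non-unit terms, and (after discarding a product-one subsequence of units if one exists) we may assume the unit terms form a $\Psi$-product-one-free sequence.

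I would then classify each non-unit term by the number $t_j\in\{1,2\}$ of coordinates lying in $\mathfrak m$ (equivalently, the number of the two primes containing it), and write $n_1,n_2$ for the corresponding counts, so $n_1+n_2>{\rm T}(k;2)$. Negating the system of inequalities defining ${\rm T}(k;2)$ yields some $d\in\{1,2\}$ with $\sum_{i\le d} i\,n_i\ge dk$. If $d=2$, i.e. $n_1+2n_2\ge 2k$, then using $\varphi$ to send each weight-$1$ term to whichever coordinate is currently deficient while letting each weight-$2$ term feed both coordinates, I can split the nilpotent mass so that the accumulated valuation reaches $k$ in both coordinates at once; a short check shows $n_1+2n_2\ge 2k$ is exactly the condition making this possible, and the resulting weighted product is $(0,0)$. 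If instead only $d=1$ holds, then $n_1\ge k$ while $n_1+2n_2<2k$; here I would orient $k$ of the weight-$1$ terms to a common coordinate (say the first), annihilating it, and aim for the idempotent $(0,1)$ (or, symmetrically, $(1,0)$), which forces the product of the surviving unit coordinates, together with the contributions of the unit terms, to equal the identity of ${\rm U}(L)$ in the second coordinate.

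The hard part is precisely this unbalanced case $d=1$. When all nilpotent valuations equal $1$ and $n_2$ is small, $(0,0)$ is genuinely unreachable, so one is forced onto $(0,1)$ or $(1,0)$, and the obstacle is that the valuation requirement (use at least $k$ weight-$1$ terms so that one coordinate vanishes) is coupled to a multiplicative requirement (the leftover unit coordinates, corrected by the available unit terms, must multiply to $1$ in ${\rm U}(L)$). My plan to break this coupling is to exploit the at least $\lfloor k/2\rfloor$ spare weight-$1$ terms together with a covering property of the unit part: I would show that for the $\Psi$-product-one-free unit block, the set of values realised by its weighted subproducts in a single fixed coordinate exhausts ${\rm U}(L)$, so the required correcting factor is always available without disturbing the already-annihilated coordinate. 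Establishing this covering/surjectivity statement—i.e. understanding the extremal $\Psi$-product-one-free unit sequences well enough to guarantee that one coordinate of their weighted subproducts ranges over all of ${\rm U}(L)$—is where essentially all the work sits; once it is in hand, combining it with the valuation bookkeeping closes the $d=1$ case and gives ${\rm I}_{\Psi}(\mathcal{S}_R)\le {\rm D}_{\Psi}({\rm U}(R))+{\rm T}(k;2)$, completing the equality.
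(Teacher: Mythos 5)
Your setup and reductions are correct: $R=L\times L$ is a finite commutative P.I.R., so the inequality $\geq$ does follow at once from Theorem \ref{Theorem finite commutative rings}; ${\rm Spec}(R)=\{\mathfrak{m}\times L,\ L\times\mathfrak{m}\}$ is a single $\Psi$-orbit with trivial stabilizers and common index $k$, so the right-hand side is ${\rm D}_{\Psi}({\rm U}(R))+T(k;2)$ with $T(k;2)=(k-1)+\lfloor k/2\rfloor$; the idempotents and units of $R$ are as you say; and the dichotomy obtained by negating the defining inequalities of $T(k;2)$ (either $n_1\geq k$, or $n_1+2n_2\geq 2k$) is the right skeleton for the upper bound. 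The case $n_1+2n_2\geq 2k$ is also genuinely settled by your argument: that inequality is exactly the condition under which the weight-one terms can be oriented by $\varphi$ so that both coordinates accumulate valuation at least $k$, producing the idempotent $(0,0)$.

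The gap is the case $d=1$, which you reduce to a covering/surjectivity claim that you do not prove, and that claim is not a technical loose end — it is the whole theorem. Concretely: (i) the claim is formulated for the unit block of the sequence, but in the pure $d=1$ regime the unit block need not be extremal. From $n_1+2n_2\leq 2k-1$ and $|T|={\rm D}_{\Psi}({\rm U}(R))+T(k;2)$ one only gets $u\geq {\rm D}_{\Psi}({\rm U}(R))-\lceil k/2\rceil$, and this is attained (take $n_1=2k-1$, $n_2=0$); so for $k\geq 3$ the unit block can be strictly shorter than ${\rm D}_{\Psi}({\rm U}(R))-1$, a covering property of \emph{extremal} $\Psi$-product-one-free sequences cannot be invoked for it, and the correcting factor would have to be manufactured jointly from the unit coordinates of the surplus weight-one terms, which are chosen adversarially — you give no mechanism for this. (ii) Even in the extremal case the claim cannot be quoted, because it is equivalent to a special case of the proposition itself: when $k=1$ (i.e.\ $L$ a field) the assertion to be proved is ${\rm I}_{\Psi}(\mathcal{S}_R)={\rm D}_{\Psi}({\rm U}(R))$, and a sequence consisting of an extremal $\Psi$-product-one-free unit block $V$ together with a single term $(0,u)$ contains a weighted idempotent-product subsequence if and only if $u^{-1}$ lies in the set of second coordinates of weighted subproducts of $V$ together with $1$ — precisely your covering statement. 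So deferring that statement defers the entire content of the upper bound rather than isolating a removable lemma. Note also that the paper itself offers no proof of this proposition (it merely remarks that it "is not hard to check"), so nothing can be imported from the text; as written, your proposal is a correct lower bound plus a correct partial case analysis, but not a proof of the equality.
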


It is shown \cite{Hungerford} that the following proposition holds. For the reader's convenience, we provide a short proof of this proposition.

\begin{prop} Any finite commutative principal ideal ring $R$ is a quotient ring some P.I.D.
\end{prop}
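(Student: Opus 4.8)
The plan is to combine three ingredients: the structure theorem for finite commutative principal ideal rings, the realization of a single finite chain ring as a quotient of a discrete valuation ring, and the elementary fact that a Dedekind domain with only finitely many maximal ideals is a P.I.D.

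First I would reduce to the local case. A finite commutative ring is Artinian, hence a finite direct product $R\cong R_1\times\cdots\times R_n$ of finite local rings; when $R$ is a P.I.R. each factor $R_i$ is a finite local P.I.R., that is, a finite chain ring whose maximal ideal $M_i=(\pi_i)$ is principal and whose only ideals are the powers $R_i\supsetneq M_i\supsetneq\cdots\supsetneq M_i^{\,e_i}=(0)$. It therefore suffices to produce one P.I.D. $D$ together with pairwise distinct maximal ideals $P_1,\ldots,P_n$ such that $D/P_i^{\,e_i}\cong R_i$ for every $i$: by the Chinese Remainder Theorem one then gets $D\big/\prod_{i=1}^n P_i^{\,e_i}\cong\prod_{i=1}^n D/P_i^{\,e_i}\cong\prod_{i=1}^n R_i\cong R$, exhibiting $R$ as a quotient of $D$.

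Second I would realize each chain ring over a valuation ring. By Hungerford's theorem every commutative P.I.R. is a finite product of homomorphic images of P.I.D.'s, and in the local finite case such an image is $V/N^{e}$ for a D.V.R. $V$ with maximal ideal $N$ (equivalently, since $M_i$ is principal the embedding dimension of $R_i$ is at most $1$, so Cohen's structure theorem presents the finite, hence complete, ring $R_i$ as a quotient $\mathcal{O}_i/N_i^{\,e_i}$ of a complete D.V.R. $\mathcal{O}_i$ with finite residue field; in equal characteristic this is just $R_i\cong\mathbb{F}_{q_i}[t]/(t^{e_i})$). Each such complete D.V.R. with finite residue field is the completion, at a suitable place and using enough ramification, of the ring of integers of a number field; hence every $R_i$ is isomorphic to $\mathcal{O}_{L_i}/Q_i^{\,e_i}$ for the ring of integers $\mathcal{O}_{L_i}$ of some number field $L_i$ and a prime $Q_i$.

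Third I would assemble everything inside one global ring. Choosing a single number field $L$ with $n$ distinct primes $Q_1,\ldots,Q_n$ reproducing, up to level $e_i$, the local data found above, I localize $\mathcal{O}_L$ at the multiplicative set $S=\mathcal{O}_L\setminus\bigcup_{i=1}^n Q_i$. Then $D=S^{-1}\mathcal{O}_L$ is a Dedekind domain whose maximal ideals are exactly $P_i=Q_iD$; a Dedekind domain with finitely many maximal ideals is a P.I.D., and localization leaves the relevant quotients unchanged, so $D/P_i^{\,e_i}\cong\mathcal{O}_L/Q_i^{\,e_i}\cong R_i$. Feeding this into the Chinese Remainder step of the first paragraph finishes the proof. \textbf{The main obstacle} is exactly this simultaneous global realization: fabricating one Dedekind domain (equivalently one semilocal P.I.D.) whose finitely many completions reproduce, up to the required finite levels, all the prescribed finite chain rings at once---especially when the factors mix several residue characteristics and non-prime residue fields. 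The reduction to local factors and the realization of a single factor are routine; packaging all the local conditions into one global object, which is what Hungerford's structure theory delivers, carries the real weight.
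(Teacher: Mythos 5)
Your proposal takes a genuinely different route from the paper, and the difference is not cosmetic. The paper's own proof is short: decompose $R\cong R_1\times\cdots\times R_k$ into finite local P.I.R.'s, invoke Hungerford's local result to write $R_i\cong D_i/J_i$ with each $D_i$ a P.I.D., and then present $R$ as the quotient of $D_1\times\cdots\times D_k$ by $J_1\times\cdots\times J_k$, asserting that $D_1\times\cdots\times D_k$ ``is a P.I.D.'' That assertion fails whenever $k\geq 2$: a nontrivial finite product of domains has zero divisors, so it is a principal ideal \emph{ring} but never a domain, and the paper's argument as written only exhibits $R$ as a quotient of a P.I.R. The gluing problem you set yourself---manufacturing a \emph{single} domain $D$ with $n$ distinct maximal ideals $P_i$ such that $D/P_i^{e_i}\cong R_i$, and then applying the Chinese Remainder Theorem---is precisely the content that a correct proof needs and that the paper's proof elides. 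Your reduction to local factors, the realization of each chain ring as $V/N^{e}$ for a D.V.R. $V$ (this is what Hungerford's local result gives; note that your parenthetical via Cohen's structure theorem only presents $R_i$ as a quotient of $W(\mathbb{F}_q)[[x]]$, a two-dimensional regular local ring, not of a D.V.R.), the passage to a semilocal localization of $\mathcal{O}_L$ (which is indeed a P.I.D.), and the final CRT step are all sound.

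There are, however, two gaps on your side, and the serious one sits at the step you yourself flag as the main obstacle. The simultaneous global realization---one number field $L$ with distinct primes $Q_1,\ldots,Q_n$, several of which may lie over the same rational prime, satisfying $\mathcal{O}_L/Q_i^{e_i}\cong R_i$---is \emph{not} ``what Hungerford's structure theory delivers.'' Hungerford's theorem gives only the decomposition into quotients of possibly different P.I.D.'s, i.e.\ your second step; it says nothing about packaging all factors over one domain. What you actually need is the standard existence theorem for number fields with prescribed completions at finitely many places: choose generators of the desired local extensions with pairwise coprime minimal polynomials, pick $g\in\mathbb{Q}[x]$ monic, simultaneously close (via CRT on coefficients) to the product of those polynomials at each relevant prime, and irreducible over $\mathbb{Q}$ by an auxiliary-prime condition; Krasner's lemma then gives $L\otimes_{\mathbb{Q}}\mathbb{Q}_p\cong\prod_i E_i$ for $L=\mathbb{Q}[x]/(g)$. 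This must be cited or proved; as written, your crucial step rests on a theorem that does not do the job. The smaller slip: ``each such complete D.V.R. with finite residue field is the completion of the ring of integers of a number field'' is false in equal characteristic, since $\mathbb{F}_q[[t]]$ is not such a completion. What is true, and suffices, is that the finite quotient $\mathbb{F}_q[t]/(t^e)$ is also realized by a ramified mixed-characteristic D.V.R., e.g.\ $V=W(\mathbb{F}_q)[\pi]$ with $\pi^e=p$, for which $V/(\pi^e)=V/(p)\cong\mathbb{F}_q[t]/(t^e)$; one should realize the quotient, not the D.V.R. With these two repairs your argument is complete---and, unlike the paper's, it actually proves the stated proposition.
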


\begin{proof} By the fundamental theorem for Noetherian rings, we know that $R\cong R_1\times \cdots\times R_k$ where $R_1,\ldots, R_k$  are finite commutative local rings.  It is shown (see \cite{Hungerford}, Definition 9 and Corollary 11) that a finite commutative local P.I.R is a quotient ring of some P.I.D. Say $D_i$ is a P.I.D, $J_i\lhd D_i$ and $R_i\cong D_i\diagup J_i$ where $i\in [1,k]$. Then $R\cong (D_1\diagup J_1)\times \cdots\times (R_k\diagup J_k)\cong (D_1\times\cdots\times D_k)\diagup (J_1\times\cdots\times J_k)$, where $(J_1\times\cdots\times J_k)\lhd (D_1\times\cdots\times D_k)$. Note that $D_1\times\cdots\times D_k$ is a P.I.D. The conclusion is proved.
\end{proof}

Since a P.I.D is a Dedekind domain, therefore, any finite P.I.R is a quotient ring of some Dedekind domain.
In the following, we shall show the result of Theorem \ref{Theorem finite commutative rings} holds true for a more general setting, i.e., for finite quotient rings of any Dedekind domain.

\begin{theorem}\label{Theorem Dedekind rings} \ Let $R$ be a finite quotient ring of some Dedekind domain.
Let $\Psi$ be a subgroup of the group ${\rm Aut}(R)$. Then
${\rm I}_{\Psi}(\mathcal{S}_R)\geq {\rm D}_{\Psi}({\rm U}(R))+\sum\limits_{P\in spec(R)} \frac
{T\left({\rm Ind}(P); \  \frac{|\Psi|}{|{\rm St}(P)|}\right)} {\frac{|\Psi|}{|{\rm St}(P)|}}.
$
\end{theorem}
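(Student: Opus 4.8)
The plan is to derive Theorem \ref{Theorem Dedekind rings} directly from Lemma \ref{Lemma finite commutative rings}, so the whole task reduces to a single verification: that a finite quotient ring $R$ of a Dedekind domain satisfies the hypothesis of that lemma, namely that every prime ideal $P$ of $R$ with ${\rm Ind}(P)>1$ contains an element $c_P$ with $(c_P)+P^{{\rm Ind}(P)}=P$. Once this is in hand, the claimed inequality is exactly the conclusion of Lemma \ref{Lemma finite commutative rings}, with nothing further to prove.

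First I would fix the structure of $R$. Write $R=D/K$ with $D$ a Dedekind domain and $K$ a nonzero proper ideal for which $R$ is finite. By unique factorization of ideals in a Dedekind domain, write $K=\mathfrak{P}_1^{e_1}\cdots\mathfrak{P}_n^{e_n}$ with the $\mathfrak{P}_i$ distinct nonzero (hence maximal) primes of $D$ and each $e_i\geq 1$. The Chinese Remainder Theorem gives $R\cong \prod_{i=1}^n D/\mathfrak{P}_i^{e_i}$, and the prime ideals of $R$ are precisely $P_i=\mathfrak{P}_i/K$ for $i\in[1,n]$, since a prime of $D$ contains $K$ only if it equals some $\mathfrak{P}_i$. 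A routine computation of the powers $P_i^{\,t}=(\mathfrak{P}_i^{\,t}+K)/K$, using $K\subseteq\mathfrak{P}_i^{e_i}$ together with the local analysis in the factor $D/\mathfrak{P}_i^{e_i}$, then yields ${\rm Ind}(P_i)=e_i$.

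The heart of the argument is producing $c_{P_i}$, and here I would invoke the defining local property of a Dedekind domain: for each nonzero prime $\mathfrak{P}_i$, the quotient $\mathfrak{P}_i/\mathfrak{P}_i^2$ is one-dimensional over the residue field $D/\mathfrak{P}_i$, equivalently $D_{\mathfrak{P}_i}$ is a discrete valuation ring. Choose a uniformizer $\pi_i\in\mathfrak{P}_i\setminus\mathfrak{P}_i^2$ and factor $(\pi_i)=\mathfrak{P}_i\mathfrak{a}_i$ with $\mathfrak{a}_i$ coprime to $\mathfrak{P}_i$; since $\mathfrak{a}_i+\mathfrak{P}_i^{\,e_i-1}=D$, this gives $(\pi_i)+\mathfrak{P}_i^{e_i}=\mathfrak{P}_i(\mathfrak{a}_i+\mathfrak{P}_i^{\,e_i-1})=\mathfrak{P}_i$. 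Setting $c_{P_i}=\pi_i+K$ and reducing this identity modulo $K$, and recalling $\mathfrak{P}_i^{e_i}+K=\mathfrak{P}_i^{e_i}$ because $K\subseteq\mathfrak{P}_i^{e_i}$, we obtain $(c_{P_i})+P_i^{\,e_i}=P_i$, that is $(c_{P_i})+P_i^{\,{\rm Ind}(P_i)}=P_i$, which is exactly the condition required.

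With the hypothesis verified for every $P_i$, Lemma \ref{Lemma finite commutative rings} applies verbatim and delivers the asserted lower bound, completing the proof. The one nontrivial ingredient is the choice of $c_{P_i}$; everything else (the CRT decomposition, the identification of the primes of $R$, and the computation ${\rm Ind}(P_i)=e_i$) is bookkeeping. I expect the main obstacle to be packaging the Dedekind-domain local principality cleanly, i.e.\ showing that each prime of $R$ becomes principal modulo $K$, as this is precisely what upgrades the principal-ideal-ring case (Theorem \ref{Theorem finite commutative rings}) to an arbitrary Dedekind quotient. In fact the very same computation shows that $R$ is itself a principal ideal ring, so one could alternatively conclude by citing Theorem \ref{Theorem finite commutative rings} directly.
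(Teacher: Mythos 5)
Your proposal is correct, and it shares the paper's skeleton: everything is reduced to verifying the hypothesis of Lemma \ref{Lemma finite commutative rings}, after which the inequality is immediate. But the verification itself is carried out by a genuinely different argument. The paper never factors the kernel and never invokes the Chinese Remainder Theorem: it proves the intrinsic statement (its Claim I) that no ideal $N$ of $R$ satisfies $Q^2\subsetneq N\subsetneq Q$, by pulling such an $N$ back to the Dedekind domain $D$; it then takes an \emph{arbitrary} $x\in Q\setminus Q^2$, so that $(x)+Q^2=Q$, and bootstraps via the identity $(x)+Q^{k}=(x)+Q^{2k}$ to reach $(x)+Q^{{\rm Ind}(Q)}=Q$, never needing to know the value of ${\rm Ind}(Q)$. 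Your route instead works upstairs in $D$: factor $K=\prod_i \mathfrak{P}_i^{e_i}$, identify ${\rm Spec}(R)$, compute ${\rm Ind}(P_i)=e_i$, choose $\pi_i$ of $\mathfrak{P}_i$-valuation exactly one, and use coprimality arithmetic of ideals to get $(\pi_i)+\mathfrak{P}_i^{e_i}=\mathfrak{P}_i$ before pushing down through the quotient map. Both are sound. The paper's approach buys economy (no CRT, no index computation, only the absence of ideals between $P^2$ and $P$); yours buys explicitness, since you identify the primes and indices of $R$ concretely. One point in your favor: the step ${\rm Ind}(P_i)=e_i$ is not dispensable bookkeeping in your setup, because $(c_{P_i})+P_i^{e_i}=P_i$ only yields $(c_{P_i})+P_i^{{\rm Ind}(P_i)}=P_i$ when $e_i\geq {\rm Ind}(P_i)$; you correctly treat it as part of the proof.

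Regarding your closing remark that the same computation shows $R$ is a principal ideal ring, so that Theorem \ref{Theorem finite commutative rings} could be cited instead: first, strictly speaking your computation shows each \emph{prime} of $R$ is principal, and one still needs the standard step that a local ring whose maximal ideal is principal and nilpotent has every ideal equal to a power of that maximal ideal. Second, and more importantly, your remark is mathematically correct --- it is classical that $D/K$ is a principal ideal ring for any nonzero ideal $K$ of a Dedekind domain --- even though it contradicts the example of $\mathbb{Z}[\sqrt{-5}]/(4)$ appended to the paper, which asserts that the image of $P=(2,1+\sqrt{-5})$ is not principal there. That example is erroneous: since $(1+\sqrt{-5})(1-\sqrt{-5})=6$, we get $2=6-4\in(1+\sqrt{-5})+(4)$, hence $(1+\sqrt{-5})+(4)=P$, so the image of $P$ is principal, generated by the class of $1+\sqrt{-5}$. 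So your alternative conclusion is valid, and it is the paper's example, not your remark, that is at fault.
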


\begin{proof} Let $Q\in {\rm Spec} R$ be such that ${\rm Ind}(Q)>1$. Then there exists no ideal $A$ of $R$ such that $Q^2\subsetneq A \subsetneq Q$. Let $R=D\diagup J$ where $D$ is a Dedekind domain and $J\lhd D$.
Let $\varphi: D\rightarrow R$ be the canonical epimorphism. Then
\begin{equation}\label{equation kernal J=}
J=\prod\limits_{i=1}^t P_i^{\alpha_i}
\end{equation}
 where $t\geq 1, \alpha_1,\ldots, \alpha_t\geq 1$,  and  $P_1,\ldots,P_t$ are distinct prime ideals of $D$. It follows from \eqref{equation kernal J=} that
 \begin{equation}\label{equation Q=varphi(P)}
 Q=\varphi(P)
 \end{equation}
  for some $P\in {\rm Spec} D$ with
  \begin{equation}\label{equation J subset P}
  J\subset P.
  \end{equation}
   Then $P=P_i$ for some $i\in [1,t]$, say $$P=P_1.$$

 \noindent \textbf{Claim I.} There exists no ideal $N\lhd R$ such that $Q^2\subsetneq N\subsetneq Q$.

 \noindent {\sl Proof of Claim I.} Assume to the contrary that there exists some ideal $N\lhd R$ such that $Q^2\subsetneq N\subsetneq Q$.
By \eqref{equation Q=varphi(P)} and \eqref{equation J subset P}, we have that $\varphi(P^2+J)=\varphi(P^2)\subset \varphi(P)*\varphi(P)=Q^2$, and thus,   $P^2\subset P^2+J=\varphi^{-1}(\varphi(P^2+J))\subset \varphi^{-1}(Q^2)\subsetneq\varphi^{-1}(N) \subsetneq \varphi^{-1}(Q)=P$. Then we derive a contradiction, since $D$ is a Dedekind domain implying that there exists no ideal $M\lhd D$ such that $P^2\subsetneq M\subsetneq P$. This proves Claim I.  \qed

Take an arbitrary $Q\in {\rm Spec R}$ such that ${\rm Ind}(Q)>1$. Take an element $x\in Q\setminus Q^2$.
Since $Q^2\subsetneq (x)+Q^2\subset Q$, it follows from Claim I that $Q=(x)+Q^2$.
Then we have that
\begin{equation}\label{equation interationforx+q2}
(x)+Q^{k}=(x)+((x)+Q^2)^k=(x)+\left(\sum\limits_{i=0}^{k-1}(x)^{k-i}*Q^{2i}\right)+Q^{2k}=(x)+Q^{2k} \ \mbox{ for any } \ k\geq 1.
\end{equation}
Fix an integer $m>\ln {\rm Ind}(Q)$. It follows from \eqref{equation interationforx+q2} that $Q=(x)+Q^2=(x)+Q^4=\cdots=(x)+Q^{2^m}=(x)+Q^{{\rm Ind}(Q)}$. Then the conclusion follows from Lemma \ref{Lemma finite commutative rings} readily.
\end{proof}

Then we close the paper with the following two conjectures.

\begin{conj}\label{Conjecture} Let $R$ be a finite ring with identity.
Let $\Psi$ be a subgroup of the group ${\rm Aut}(R)$. Then
${\rm I}_{\Psi}(\mathcal{S}_R)\geq {\rm D}_{\Psi}({\rm U}(R))+\sum\limits_{P\in spec(R)} \frac
{T\left({\rm Ind}(P); \  \frac{|\Psi|}{|{\rm St}(P)|}\right)} {\frac{|\Psi|}{|{\rm St}(P)|}}.
$
\end{conj}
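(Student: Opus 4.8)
The statement is a lower bound on ${\rm I}_{\Psi}(\mathcal{S}_R)$, so the plan is to exhibit a $\Psi$-idempotent-product-free sequence of length one below the claimed right-hand side, following the architecture of the proof of Lemma \ref{Lemma finite commutative rings}: partition ${\rm Spec} R$ into $\Psi$-orbits $\mathscr{O}_1,\ldots,\mathscr{O}_k$, attach to each orbit a block $B_{\mathscr{O}_j}$ of length $\sum_{P\in\mathscr{O}_j} T({\rm Ind}(P);|\Psi|/|{\rm St}(P)|)\big/(|\Psi|/|{\rm St}(P)|)$, concatenate with a maximal $\Psi$-product-one-free sequence $V$ over ${\rm U}(R)$, and prove the concatenation is $\Psi$-idempotent-product-free. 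Two things block a direct appeal to Lemma \ref{Lemma finite commutative rings}. First, its standing hypothesis---that every prime $P$ with ${\rm Ind}(P)>1$ is principal modulo $P^{{\rm Ind}(P)}$---can fail: already in $R=\mathbb{F}_2[x,y]/(x^2,y^2)$ the maximal ideal is not principal modulo $\mathfrak{m}^{{\rm Ind}(\mathfrak{m})}$. Second, for genuinely non-commutative $R$ one must first extend Definition 1 and ${\rm D}_{\Psi}$ to unordered products; I would fix the convention that a sequence is $\Psi$-idempotent-product if some ordering of its terms, with some weights from $\Psi$, has idempotent product, so that ``free'' is the strongest condition and yields the largest lower bound.

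The decisive new ingredient is a replacement for Claim B that does not use principality. For a prime $P$ with $n={\rm Ind}(P)>1$ we have $P^{n-1}\supsetneq P^n$, hence a single product $\xi_1*\cdots*\xi_{n-1}\in P^{n-1}\setminus P^n$ with all $\xi_i\in P$; and then every nonempty sub-product $\prod_{i\in S}\xi_i$ with $S\subseteq[1,n-1]$ lies outside $P^n$, since otherwise multiplying by the complementary factors would force $\xi_1*\cdots*\xi_{n-1}\in P^n$. This furnishes the ``a product of fewer than ${\rm Ind}(P)$ generators avoids $P^{{\rm Ind}(P)}$'' property for non-principal $P$. On top of such factorizing families I would rebuild the sets $G_P$, $H_{\mathscr{O};X}$ and $\mathscr{H}_{\mathscr{O};t}$ by the same Chinese Remainder Theorem patching as in Claim D, and recover the $\Psi$-stability statements (the analogs of Claims A and E) from the fact that every $\psi\in\Psi$ preserves the filtration $P\supseteq P^2\supseteq\cdots$ and permutes the primes inside each orbit.

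The real difficulty is the weighted counting. The block in \eqref{equation sequence BO} is $B_{\mathscr{O}}=b_1^{[d_1]}\cdot\ldots\cdot b_h^{[d_h]}$, a single representative $b_t$ of each type repeated $d_t$ times, and this rigidity is precisely where principality entered: repeating one generator keeps all sub-products out of $P^n$ only because any product of at most $n-1$ copies of a single generator avoids $P^n$. For non-principal $P$ this is false---in $\mathbb{F}_2[x,y]/(x^2,y^2)$ no element squares to a nonzero element, so $b_1^{[2]}$ would contain the zero product---yet the correct length $T(3;1)=2$ is still realized, now by the sequence $x\cdot(x+y)$ of two distinct generators whose every weighted pairwise product equals $xy\neq 0$. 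Thus the task becomes to realize the full count $T({\rm Ind}(P);|\mathscr{O}|)$ by a flexible multiset of generators---choosing, in each multiplicity slot, elements distinct enough that all weighted sub-products of length below the relevant index are nonzero modulo the corresponding prime power---while still obeying the constraints \eqref{equation iti<dh}. Equivalently, one needs a nonvanishing statement in the $\Psi$-equivariant associated graded algebra $\mathrm{gr}_P R=\bigoplus_i P^i/P^{i+1}$: degree-one elements, together with their $\Psi$-translates, admitting the prescribed number of pairwise-nonzero products up to degree ${\rm Ind}(P)-1$.

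I expect this last step to be the main obstacle, and it is the reason the statement is only conjectured. It is not clear that the numerology of $T(m;h)$, calibrated in the principal case, always matches what an arbitrary $\mathrm{gr}_P R$ permits: a graded algebra with very few nonzero top-degree products might not support a $\Psi$-equivariant family achieving the count $T$. A plausible attack is to start from the multi-generator construction underlying the unweighted Theorem D (which already produces flexible families for trivial $\Psi$), then symmetrize it over $\Psi$ to restore equivariance and verify that the inequalities \eqref{equation iti<dh} survive; proving that symmetrization never destroys more than the allotted count is the crux. For genuinely non-commutative $R$ there is the further, and probably harder, obstacle that both $\mathrm{gr}_P R$ and the weighted unit-group invariant ${\rm D}_{\Psi}({\rm U}(R))$ must be developed for non-abelian data, where no theory of ${\rm D}_{\Psi}$ is yet available.
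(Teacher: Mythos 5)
This statement is left as a conjecture in the paper: there is no proof of it there to compare against. The paper's own machinery stops short of it --- Lemma~\ref{Lemma finite commutative rings} requires that every prime $P$ with ${\rm Ind}(P)>1$ admit $c_P\in P$ with $(c_P)+P^{{\rm Ind}(P)}=P$, and Theorems~\ref{Theorem finite commutative rings} and~\ref{Theorem Dedekind rings} only cover principal ideal rings and finite quotients of Dedekind domains. So your decision not to claim a complete proof, and your diagnosis that the missing step is precisely the failure of that principality hypothesis, is correct and matches the paper's state of knowledge: your example $R=\mathbb{F}_2[x,y]/(x^2,y^2)$ does fall outside all of the paper's results (here $\mathfrak{m}=(x,y)$ has ${\rm Ind}(\mathfrak{m})=3$ and no element generates $\mathfrak{m}$ modulo $\mathfrak{m}^3=0$), and your unweighted replacement for Claim B (a pure product $\xi_1*\cdots*\xi_{n-1}\in P^{n-1}\setminus P^n$, all of whose sub-products then avoid the ideal $P^n$) is sound as far as it goes.

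However, one concrete assertion in your proposal is false, and it shows the obstacle is worse than you describe. You claim that in $R=\mathbb{F}_2[x,y]/(x^2,y^2)$ the count $T(3;1)=2$ is still realized by the sequence $x\cdot(x+y)$, ``whose every weighted pairwise product equals $xy\neq 0$''. Take $\sigma\in{\rm Aut}(R)$ with $\sigma(x)=x+y$, $\sigma(y)=y$; then $\sigma(x+y)=x$, so $x*\sigma(x+y)=x^2=0$, which is an idempotent. Worse, for $\Psi={\rm Aut}(R)$ no length-two block over $\mathfrak{m}\setminus\{0\}$ exists at all: if some term lies in $\mathfrak{m}^2$, every weighted product lies in $\mathfrak{m}^3=0$; otherwise, since every element of ${\rm GL}(\mathfrak{m}/\mathfrak{m}^2)\cong{\rm GL}_2(\mathbb{F}_2)$ lifts to an automorphism of $R$ (any linear substitution preserves $x^2=y^2=0$ in characteristic two), one may choose $\psi_1,\psi_2$ aligning both linear parts with $x$, and then $\psi_1(a_1)*\psi_2(a_2)=(x+\alpha xy)*(x+\beta xy)=0$. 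Hence for this ring and this $\Psi$ the block construction cannot produce any $\Psi$-idempotent-product-free block of length $T({\rm Ind}(\mathfrak{m});1)=2$, so your proposed repair --- flexible multisets of degree-one generators in the associated graded algebra, then symmetrized over $\Psi$ --- cannot close the gap. Any proof of the conjecture (if it is true) must use sequences of a different shape than ``block over the primes times unit sequence'', or a counting function other than $T$; and this same example is the natural first place to test whether the conjectured inequality itself can fail.
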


\begin{conj}\label{Conjecture} Let $R$ be a finite P.I.R with identity.
Let $\Psi$ be a subgroup of the group ${\rm Aut}(R)$. Then
${\rm I}_{\Psi}(\mathcal{S}_R)={\rm D}_{\Psi}({\rm U}(R))+\sum\limits_{P\in spec(R)} \frac
{T\left({\rm Ind}(P); \  \frac{|\Psi|}{|{\rm St}(P)|}\right)} {\frac{|\Psi|}{|{\rm St}(P)|}}.
$
\end{conj}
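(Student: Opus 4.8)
The lower bound $\mathrm{I}_{\Psi}(\mathcal{S}_R)\geq \mathrm{D}_{\Psi}(\mathrm{U}(R))+\sum_{P\in \mathrm{Spec}\,R}\frac{T(\mathrm{Ind}(P);\,|\mathscr{O}_P|)}{|\mathscr{O}_P|}$ is already furnished by Theorem \ref{Theorem finite commutative rings}, so the entire task is the matching upper bound: every sequence $S$ over $R$ with $|S|\geq \mathrm{D}_{\Psi}(\mathrm{U}(R))+\sum_{l=1}^{m}T(n_l;h_l)$ contains a nonempty $\Psi$-idempotent-product subsequence, where $\mathscr{O}_1,\ldots,\mathscr{O}_m$ are the orbits of $\mathrm{Spec}\,R$ under $\Psi$, $h_l=|\mathscr{O}_l|$, and $n_l$ is the common index on $\mathscr{O}_l$ (the equivalence of this budget with $\sum_{P}\frac{T(\mathrm{Ind}(P);\,|\mathscr{O}_P|)}{|\mathscr{O}_P|}$ is the computation of Claim F). The plan is to first reduce the ring theory to a purely combinatorial clearing statement about valuation vectors, and then to prove that statement as the upper-bound companion of Claim G.

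First I would invoke the structure theory of finite P.I.R.'s to write $R\cong R_1\times\cdots\times R_k$ with each $R_j$ a finite local P.I.R.\ whose maximal ideal $M_j$ is principal and nilpotent of index $n_j$; then $\mathrm{Spec}\,R=\{P_1,\ldots,P_k\}$ is the set of coordinate maximal ideals and the idempotents of $R$ are exactly the $\{0,1\}$-tuples. To each term $a\in R$ I attach its valuation vector $\mathbf{v}(a)=(v_{P_1}(a),\ldots,v_{P_k}(a))$, truncated at $n_j$ in coordinate $j$; by Claim A a weight $\psi\in\Psi$ merely permutes the entries of $\mathbf{v}(a)$ according to the permutation $\psi$ induces on $\mathrm{Spec}\,R$, and acts as a group automorphism on the unit parts. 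Consequently a weighted product $\prod\psi_i(a_i)$ is idempotent if and only if in each coordinate $j$ the summed valuation is either $0$ or $\geq n_j$, and in every coordinate where it is $0$ the product of the unit parts equals $1_{R_j}$. This splits the target into a valuation ``clearing'' problem and a units problem governed by $\mathrm{D}_{\Psi}(\mathrm{U}(R))$.

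The combinatorial core I would establish is the companion of Claim G for a single orbit $\mathscr{O}$ of size $h$ and index $n$: any collection of more than $T(n;h)$ terms admits, after suitable weighting, a nonempty sub-collection whose product has valuation $\geq n$ in every orbit-coordinate in which it is a non-unit, i.e.\ one can force each coordinate of the orbit to become either a unit or a zero. Here the recurrence of the opening Proposition is used contrapositively: exceeding $T(n;h)$ forces the inequality $\sum_{i=1}^{d} i\,t_i<dn$ of \eqref{equation iti<dh} to fail for some $d\in[1,h]$, and a Hall-type defect argument on the incidence between terms and the $d$ overloaded coordinates produces a sub-collection saturating those coordinates to level $n$. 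Peeling off such clearing subsequences orbit by orbit consumes at most $\sum_{l}T(n_l;h_l)$ terms; the remaining at least $\mathrm{D}_{\Psi}(\mathrm{U}(R))$ terms are units in all not-yet-cleared coordinates, and the $\Psi$-weighted Davenport constant of $\mathrm{U}(R)$ then yields a sub-product equal to $1$ there, which together with the cleared (zero) coordinates assembles an idempotent.

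The hard part, and the reason the statement remains only a conjecture, is the coupling that makes the orbit-by-orbit peeling legitimate. A weight $\psi_i$ is applied to an entire term at once, so it permutes that term's non-unit coordinates within every orbit simultaneously; a term spent to saturate coordinates of $\mathscr{O}_l$ to zero may be a non-unit in coordinates of another orbit $\mathscr{O}_{l'}$, which can then no longer be kept in the unit set there. Moreover, when $\Psi$ acts on an orbit as a proper transitive subgroup of the symmetric group, a term lying in exactly $i$ primes can be rotated only to the $i$-subsets in its own $\Psi$-orbit rather than to all $i$-subsets, so the defect argument must be run relative to this restricted reachability while still matching the subset-independent bound $T(n;h)$. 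Reconciling this constrained weighting with a simultaneous clearing of all orbits and the preservation of units elsewhere is the central obstacle; I would first settle the two extremes already understood, namely all $\mathrm{Ind}(P)=1$ (where the bound collapses to $\mathrm{D}_{\Psi}(\mathrm{U}(R))$) and the local case $k=1$, and then attempt an induction on $\sum_j(n_j-1)$, removing one unit of valuation at a time by passing to a suitable quotient, so as to control the coupling step by step.
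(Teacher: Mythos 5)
The first thing to note is that the paper offers no proof of this statement for you to be compared against: it is stated as a conjecture and left open, the paper establishing only the inequality $\geq$ (Theorem \ref{Theorem finite commutative rings}, via the construction of Lemma \ref{Lemma finite commutative rings}). Your proposal correctly isolates what is missing --- the upper bound --- and your reduction via the structure theorem for finite P.I.R.'s (splitting $R$ into local factors, reading idempotents as $\{0,1\}$-tuples, separating a valuation-clearing problem from a units problem governed by ${\rm D}_{\Psi}({\rm U}(R))$) is the natural first step. But the proposal is not a proof, and you concede as much: all of the content sits in the ``companion of Claim G'' that you assert --- that any more than $T(n;h)$ terms admit, after weighting, a nonempty sub-collection whose product reaches valuation $\geq n$ in every orbit-coordinate it touches --- and for this you offer only the phrase ``Hall-type defect argument''. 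Exceeding $T(n;h)$ does force $\sum_{i=1}^{d} i\,t_i \geq dn$ for some $d$, i.e.\ a total incidence of at least $dn$ spread over $d$ primes, but such an averaging statement does not by itself yield a sub-collection in which \emph{each} touched prime is cleared to level $n$ simultaneously: for the upper bound the terms carry arbitrary valuation vectors (not the $0/1$-type vectors engineered in the lower-bound construction), a weight moves a term's whole incidence pattern at once and only within its $\Psi$-orbit, and a term spent clearing one orbit may be a non-unit in another. These are exactly the difficulties you name in your final paragraph; naming them is not overcoming them.

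To see that the gap is genuinely open rather than routine detail, specialize to trivial weights $\Psi=\{\mathrm{id}\}$: every orbit is a singleton, $T({\rm Ind}(P);1)={\rm Ind}(P)-1$, and your claimed upper bound becomes ${\rm I}(\mathcal{S}_R)\leq {\rm D}({\rm U}(R))+\sum_{P}({\rm Ind}(P)-1)$, i.e.\ the equality case of Theorem D. By the results quoted in the paper, that equality is known only when $R$ is local or all indices equal $1$ (equivalently, by Theorem C, when the modulus is a prime power or a product of distinct primes). Already for $R=\mathbb{Z}/p^{2}q^{2}\mathbb{Z}$ --- a finite P.I.R.\ with two prime ideals, each of index $2$, and no weight or orbit subtleties whatsoever --- the equality is not covered by anything known. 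So the obstruction is not the $\Psi$-equivariant bookkeeping but the clearing lemma itself, and any viable attack (including your proposed induction on $\sum_{j}(n_j-1)$ by passing to quotients) must first supply a new idea for that unweighted two-prime case before the weighted, multi-orbit coupling can even be addressed.
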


\noindent {\bf Acknowledgements}

This work is supported by NSFC (grant no. 11971347).


\begin{thebibliography}{99}




\bibitem{AdiChenJCTA} S.D. Adhikari, Y.G. Chen, \emph{Davenport constant with weights and some related questions, II,} J. Combin. Theory Theory Ser. A,
\textbf{115} (2008) 178--184.

\bibitem{AdiChenDis} S.D. Adhikari, Y.G. Chen, J.B. Friedlander, S.V. Konyagin, F. Pappalardi, \emph{Contributions to zero-sum problems,} Discrete Math.,
\textbf{306} (2006) 1--10.

\bibitem{AdiDaivSun} S.D. Adhikari,, D.J. Grynkiewicz, Z.W. Sun \emph{On weighted zero-sum sequences,} Advances in Applied Mathematics, \textbf{48} (2012) 506--527.

\bibitem{AdiRath} S.D. Adhikari and P. Rath, \emph{Davenport constant with weights and some related questions,} Integers, \textbf{6} (2006) A30. (electronic).



\bibitem{Burgess69} D.A. Burgess, \emph{A problem on semi-groups}, Studia Sci. Math. Hungar., \textbf{4} (1969) 9--11.

\bibitem{DeMaOrOr} C. Delorme, I. Marquez, O. Ordaz, and A. Ortuno, \emph{Existence conditions for barycentric sequences,} Discrete Math.,
\textbf{281} (2004) 163--172.


\bibitem{Deng}    C. Deng, \emph{Davenport constant for commutative rings,} J. Number Theory \textbf{172} (2017) 321--342.


\bibitem{GaoGeroldingersurvey} W. Gao and A. Geroldinger, \emph{Zero-sum problems in finite abelian groups:
a survey,} Expo. Math.,  \textbf{24} (2006) 337--369.


\bibitem{GH} A. Geroldinger and F. Halter-Koch, \emph{Non-Unique
Factorizations. Algebraic, Combinatorial and Analytic Theory,}
Pure Appl. Math., vol. 278, Chapman $\&$ Hall/CRC,
2006.

\bibitem{Gillam72} D.W.H. Gillam, T.E. Hall and N.H. Williams, \emph{On finite semigroups and idempotents}, Bull. London Math. Soc., \textbf{4} (1972) 143--144.
	



\bibitem{Grynkiewiczmono} D.J. Grynkiewicz, \emph{Structural Additive Theory}, Developments in Mathematics, vol. 30, Springer, Cham, 2013.

\bibitem{Grynkiewicz} D.J. Grynkiewicz, L.E. Marchan, O. Ordaz,   \emph{A weighted generalization of two theorems of Gao,} The Ramanujan Journal, \textbf{3} (2012) 323--340.

\bibitem{Halter-Koch}    F. Halter-Koch, \emph{A generalization of Davenport $^{\large,}$s constant and its arithmetical applications,} Colloq. Math., \textbf{63} (1992) 203--210.

\bibitem{Hungerford} T. W. Hungerford, \emph{On the structures of principal ideal rings}, Pacific Journal of
Mathematics, \textbf{25}(3), (1968) 543--547.

\bibitem{KravitzSah} N. Kravitz and A. Sah, \emph{A stronger connection between the Erd\H{o}s-Burgess and Davenport constants,} J.
Number Theory, \textbf{210} (2020) 373--388.

\bibitem{Luca} F. Luca, \emph{A generalization of a classical zero-sum problem,} Discrete Math., \textbf{307} (2007) 1672--1678.

\bibitem{MaOrsaSchmid} L.E. Marchan, O. Ordaz, I. Santos,
W.A. Schmid   \emph{Multi-wise and constrained fully weighted Davenport constants and interactions with coding theory,} J. Combin. Theory Ser. A,
\textbf{135} (2015) 237--267.








\bibitem{Olson1} J.E. Olson,  \emph{A Combinatorial Problem on Finite Abelian Groups, I}, J. Number Theory, \textbf{1} (1969) 8--10.


 \bibitem{rog1}
K. Rogers, \emph{A combinatorial problem in Abelian groups,} Math. Proc. Cambridge Philos. Soc., \textbf{59} (1963) 559--562.

\bibitem{Skalba}  M. Ska{\l}ba, \emph{On numbers with a unique representation by a binary quadratic form,}  Acta Arith., \textbf{64} (1993) 59--68.

\bibitem{SkalbaEuropean}  M. Ska{\l}ba, \emph{On the Relative Davenport Constant,}  European J. Combin., \textbf{19} (1998) 221--225.

\bibitem{Thangadurai} R. Thangadurai, \emph{A variant of Davenport $^{\large,}$s constant,} Proc. Indian Acad. Sci. Math. Sci. \textbf{117} (2007) 147--158.

\bibitem{wangDavenportII}  G. Wang, \emph{Davenport constant for semigroups II,}  J. Number Theory, \textbf{153} (2015) 124--134.

\bibitem{wangAddtiveirreducible}  G. Wang, \emph{Additively irreducible sequences in commutative semigroups,}  J. Combin. Theory Ser. A, \textbf{152} (2017)  380--397.

\bibitem{wangidempotent}  G. Wang, \emph{Structure of the largest idempotent-product free sequences in
semigroups,}  J. Number Theory, \textbf{195} (2019) 84--95.


\bibitem{wangAIMS} G. Wang, \emph{Lower bound for the Erd\H{o}s-Burgess constant of finite commutative rings,} AIMS Mathematics,  \textbf{5} (2020) 4424--4431.

\bibitem{wangINJ} G. Wang, \emph{Structure of long idempotent-sum-free sequences over
finite cyclic semigroups,} Int.J. Number Theory, \textbf{17} (2021) 871--886.

\bibitem{wangAdmath} G. Wang, \emph{Erd\H{o}s-Burgess constant in commutative rings,} Arch. Math., \textbf{116} (2021) 171--178.

\bibitem{wangHaoZhuang} H. Wang, J. Hao and L. Zhang, \emph{On the Erd\H{o}s-Burgess constant of the multiplicative semigroup of a
factor ring of $F_q[x]$,} Int.J. Number Theory, \textbf{15} (2019) 131--136.

\bibitem{wangZhangWangQu} H. Wang, L. Zhang, Q. Wang, Y. Qu, \emph{Davenport constant of the multiplicative semigroup of the
quotient ring $\frac{\mathbb{F}_p[x]}{\langle f(x)\rangle}$,} Int. J. Number Theory, \textbf{12} (2016) 663--669.


\bibitem{YuanZengEruo} P. Yuan and X. Zeng, \emph{Davenport constant with weights,} European Journal of Combinatorics, \textbf{31} (2010) 677--680.

\bibitem{ZengYuanDiscrete}  X. Zeng and P. Yuan, \emph{Weighted Davenport $^{\large,}$s constant and the weighted EGZ Theorem,} Discrete Mathematics, \textbf{311} (2011) 1940--1947.





\end{thebibliography}
\end{document}